\documentclass[a4paper,12pt]{amsart}
\usepackage[utf8]{inputenc}
\usepackage{amsmath}
\usepackage{amsfonts}
\usepackage{amsthm}
\usepackage{amssymb}
\usepackage[mathscr]{euscript}
\usepackage{xcolor}
\usepackage{enumerate}
\usepackage{tikz}
\usepackage{url}
\usepackage{soul}

\usepackage[
margin=1in,
marginpar=2cm,
includefoot,
footskip=30pt,
]{geometry}

\newtheorem{theorem}[equation]{Theorem}
\newtheorem{lemma}[equation]{Lemma}
\newtheorem{proposition}[equation]{Proposition}
\newtheorem{corollary}[equation]{Corollary}

\theoremstyle{definition}
\newtheorem{definition}[equation]{Definition}

\theoremstyle{remark}
\newtheorem{example}[equation]{Example}
\newtheorem{remark}[equation]{Remark}

\numberwithin{equation}{section}

\newcommand{\osf}{{\normalfont \textsf{X}}}
\newcommand{\osfY}{{\normalfont \textsf{Y}}}
\newcommand{\gosf}[1]{\textsf{X}_{#1}}

\newcommand{\lang}{\CL_{\osf}}
\newcommand{\algshift}{\CA_R(\osf)}
\newcommand{\ualgshift}{\TCA_R(\osf)}

\newcommand{\ucsalgshift}{\TCO_\osf}
\newcommand{\ucsalgshiftY}{\TCO_\osfY}

\newcommand{\calgshift}{\CO_{\osf}}
\newcommand{\ucalgshift}{\TCO_{\osf}}

\newcommand{\alf}{\mathscr{A}}

\newcommand{\N}{\mathbb{N}}
\newcommand{\F}{\mathbb{F}}
\newcommand{\C}{\mathbb{C}}

\newcommand{\CA}{\mathcal{A}}
\newcommand{\CB}{\mathcal{B}}

\newcommand{\CE}{\mathcal{E}}
\newcommand{\CF}{\mathcal{F}}
\newcommand{\CG}{\mathcal{G}}

\newcommand{\CL}{\mathcal{L}}
\newcommand{\CO}{\mathcal{O}}
\newcommand{\CZ}{\mathcal{Z}}
								%powerset of a set

\newcommand{\Img}{\operatorname{Im}}

\newcommand{\tlspace}{(\CE,\CL,\TCB)}

\newcommand{\nn}{\mathbb{N}}

\newcommand{\scj}{\subseteq}

\newcommand{\eword}{\omega}

\newcommand{\vecspan}{\operatorname{span}}
\newcommand{\cvecspan}{\overline{\vecspan}}

\newcommand{\Lc}{\operatorname{Lc}}
\newcommand{\dom}{\operatorname{dom}}

\newcommand{\TCA}{\widetilde{\CA}}
\newcommand{\TCB}{\mathcal{U}}

\newcommand{\TCO}{\widetilde{\CO}}

\newcommand{\HTCB}{\widehat{\TCB}}

\newcommand{\hsig}{\widehat{\sigma}}
\newcommand{\hh}{\widehat{h}}

% partial actions
\newcommand{\tauh}{\widehat{\tau}}
\newcommand{\varphih}{\widehat{\varphi}}

\title{C*-Algebras of one-sided subshifts over arbitrary alphabets}
\author[G. Boava]{Giuliano Boava}
\author[G.G. de Castro]{Gilles G. de Castro}
\author[D. Gonçalves]{Daniel Gonçalves}
\address[Giuliano Boava, Gilles G. de Castro and Daniel Gonçalves]{Departamento de Matem\'atica, Universidade Federal de Santa Catarina, 88040-970 Florian\'opolis SC, Brazil. }
\email{g.boava@ufsc.br \\ gilles.castro@ufsc.br \\ daemig@gmail.com}
\author[D.W. van Wyk]{Daniel W. van Wyk}
\address[Daniel W. van Wyk]{Department of Mathematics, Fairfield University, Fairfield, CT 06824 USA, and  \\
Department of Mathematics and Applied Mathematics, University of the Free State, Park West, Bloemfontein, 9301, South Africa.}
\email{dvanwyk@fairfield.edu }

\begin{document}

\keywords{Shift spaces, subshifts, infinite alphabets, C*-algebras, conjugacy, groupoids}
\subjclass[2020]{Primary: 46L55, 37B10. Secondary: 37A55, 37B05, 46L35, 22A22}

\thanks{The second and third authors were partially supported by Capes-Print Brazil. The third author was partially supported by Conselho Nacional de Desenvolvimento Cient\'ifico e Tecnol\'ogico (CNPq) - Brazil, and Funda\c{c}\~ao de Amparo \`a Pesquisa e Inova\c{c}\~ao do Estado de Santa Catarina (FAPESC).}

\begin{abstract}
We associate a C*-algebra $\widetilde{\mathcal{O}}_{\textsf{X}}$ with a subshift over an arbitrary, possibly infinite, alphabet. We show that $\widetilde{\mathcal{O}}_{\textsf{X}}$ is a full invariant for topological conjugacy of the subshifts of Ott, Tomforde, and Willis. When the alphabet is countable, we show that $\widetilde{\mathcal{O}}_{\textsf{X}}$ is an invariant for isometric conjugacy of subshifts with the product metric. For a suitable partial action associated with a subshift over a countable alphabet, we show that $\widetilde{\mathcal{O}}_{\textsf{X}}$ is also an invariant for continuous orbit equivalence. Additionally, we give a concrete way to compute the K-theory of $\widetilde{\mathcal{O}}_{\textsf{X}}$ and illustrate it with two examples.
\end{abstract}

\maketitle

\section{Introduction}
The study of subshifts is at the heart of symbolic dynamics. Describing various types of equivalences between subshifts is one of the main arteries of research in the field. For example, for subshifts of finite type, conjugacy is equivalent to strong shift equivalence. However, it is not known for which classes of subshifts conjugacy is equivalent to shift equivalence, which is easier to decide. This question is usually referred to as the Williams' problem, see \cite{CGGH, MR0331436, Williams}.

Recent work allows a complete recast of Williams' problem in operator algebras since isomorphism and Morita equivalence of two graph C*-algebras given by shifts of finite type are connected to equivalences of the subshifts, such as topological conjugacy and shift equivalence (see, for instance, \cite{CDE} for recent results and further details). 
These results follow the celebrated interaction between dynamics and C*-algebras, which includes the work of Giordano-Matui-Putnam-Skau on describing orbit equivalence of Cantor minimal system via C*-algebraic invariants, \cite{GMPS, GPS}, the seminal work of Cuntz-Krieger on C*-algebras associated with shifts of finite type (given by matrices), \cite{CuntzKrieger}, Matsumoto and Matui's work on orbit equivalence and continuous orbit equivalence of topological Markov chains \cite{MatsumotoOrbit,MatuiMatsumoto}, Carlsen's work on subshift algebras over finite alphabets, \cite{CarlsenShift}, and more recently the work by Brix-Carlsen on the description of conjugacy of subshifts over finite alphabets via an isomorphism of associated algebras, \cite{BrixCarlsen}, to mention a few.

The aim of this paper is in the same spirit as the previous paragraph. We connect the theory of subshifts over infinite alphabets with operator algebras. For an infinite alphabet, there is no standard definition of a subshift. The usual approach for the full shift is to equip a countable alphabet $\alf$ with the discrete topology. Then consider the product $\prod_{n\in\nn} \alf$ with the product topology. This topology is induced by the product metric
\[d((x_n),(y_n))= \frac{1}{2^k},\] 
where $k$ is such that $x_i=y_i$ for $i=0,\ldots k-1$ and $x_k\neq y_k$. In this setting, a subshift $\osf$ is defined as a shift-invariant closed subspace. Alternatively, a subshift can be defined in terms of a family of forbidden blocks (see \cite{DGM, MR} and Section~\ref{symbolic} for details). With the relative topology, the space obtained is Polish, but might not even be locally compact. The lack of local compactness is an obstacle to using analytical techniques to study such subshifts.  

In \cite{OTW}, Ott, Tomforde, and Willis propose an alternative definition of a subshift over an infinite alphabet. They first consider a compact Hausdorff space as a new version for the full subshift. Then, they define a subshift as a shift-invariant closed subspace with an extra condition, which they call the infinite extension property (see \cite[Definition~3.3]{OTW}). We call these OTW-subshifts. Since OTW-subshifts are compact Hausdorff spaces, analytical tools are more readily available. Moreover, starting with a subshift $\osf$, there is a corresponding OTW-subshift $\osf^{OTW}$. In Theorem~\ref{aguaceiro}, we show that isometric conjugacy for subshifts implies conjugacy of the correspondent OTW-subshifts.

Recently, the authors of this paper introduced algebras associated with a subshift in a purely algebraic setting, see \cite{BCGW22}. These algebras deepened the connection between OTW-subshifts and noncommutative algebras, which in the original work of Ott-Tomforde-Willis is restricted to the case of edge subshifts associated with a certain class of graphs. In \cite{BCGW22}, the authors describe conjugacy of OTW-subshifts in terms of isomorphisms of algebras associated with $\osf$.

We do not know of a C*-algebra associated with a general subshift over an infinite alphabet that forms an invariant for notions of equivalence of subshifts. In this paper we remedy this. We overcome the lack of local compactness of a subshift $\osf$ by using combinatorial aspects of it to define a C*-algebra $\ucalgshift$ associated with $\osf$. We apply the results of \cite{TopConjLocHom} to show that $\ucalgshift$ together with additional data is a complete invariant for conjugacy of OTW-subshifts (Theorem~\ref{stariso}). This is done by identifying an OTW-subshift with the spectrum of a certain commutative C*-subalgebra of $\ucalgshift$. Moreover, using the connection between $\osf$ and $\osf^{OTW}$, we obtain one of the main results of this paper, which says that our C*-algebras form an invariant for isometric conjugacy of subshifts over countable alphabets (Theorem~\ref{invariant}).

The C*-algebras we introduce in this paper are the analytical counterparts of the algebras introduced in \cite{BCGW22}. We use the results of \cite{BCGW22} and of \cite{EGG} on core subalgebras to obtain results about the structure of $\ucalgshift$. In particular, we show that $\ucalgshift$ can be described using labelled spaces, groupoids and partial actions.

The K-theory of C*-algebras provides a standard invariant for isomorphism of C*-algebras. The description of a unital subshift C*-algebra $\ucalgshift$ using labelled spaces and \cite[Theorem~4.4]{MR3614028} provide a concrete way to compute the K-theory of $\ucalgshift$. We illustrate this by computing the K-theory of two examples. In the first, we use K-theory to show that there is a C*-algebra of a graph that is not isomorphic to any of our C*-algebras associated with the edge subshift of the graph. In the second example, we compute the K-theory of the C*-algebra of a 2-step subshift that is not conjugate to any 1-step subshift (example taken from \cite{DDStep}). In particular, it is not conjugate to any graph edge subshift. Lastly, from our theorem about isometric conjugacy, we deduce that the K-theory of subshift C*-algebras is an invariant for isometric conjugacy of subshifts.

We use our groupoid model for $\ucalgshift$ to prove Theorem~\ref{stariso}, which gives a full invariant for topological conjugacy of OTW-subshifts. The unit space of this groupoid is homeomorphic to the spectrum of another commutative C*-subalgebra of $\ucalgshift$. Under an extra condition, isomorphism of subshift C*-algebras preserving these commutative C*-subalgebras is described both as an isomorphism of the corresponding groupoids and as a continuous orbit equivalency between certain topological partial actions (Theorem~\ref{diadesol}).

The paper is organized as follows. Section~\ref{symbolic} contains basic elements of symbolic dynamics and the definitions of subshifts and OTW-subshifts.

In Section~\ref{c-subshift}, we define two C*-algebras, $\ucalgshift$ and $\calgshift$, associated with a subshift $\osf$, where $\ucalgshift$ is unital and $\calgshift$ is not necessarily unital. The latter algebra is mostly used to compare with non-unital graph and ultragraph C*-algebras. The main focus of this paper is on $\ucsalgshift$. We show that the purely algebraic subshift algebras of \cite{BCGW22} are core subalgebras of our C*-algebras. We realize $\ucalgshift$ and $\calgshift$ as C*-algebras of labelled spaces. Finally, we prove a gauge invariance uniqueness theorem. 

In Section~\ref{examples}, we provide examples of known C*-algebras that can be seen as C*-algebras of a subshift. First, we show that the C*-algebra of a graph is isomorphic to the edge subshift C*-algebra, provided the graph has no sinks and no vertex that is simultaneously a source or an infinite emitter. Our second example shows that for ultragraphs with only regular vertices, the C*-algebra of the ultragraph is the C*-algebra of the one-sided edge subshift of the ultragraph. Our last example of this section shows that every unital Exel-Laca algebra is isomorphic to the unital C*-algebra of a subshift.

We dedicate Section~\ref{k-theory} to the K-theory of the unital subshift C*-algebra $\ucalgshift$. Adapting \cite[Theorem~4.4]{MR3614028} to $\ucalgshift$, we obtain a concrete way to compute the K-theory of $\ucalgshift$ and apply it to two examples. 

Finally, Section~\ref{dynamical} contains the main results of this paper, Theorems~\ref{diadesol}, \ref{stariso}, \ref{aguaceiro} and \ref{invariant}, which connect the dynamical and algebraic aspects as outlined above.

\section{Symbolic Dynamics}\label{symbolic}
In this paper, we consider $\N=\{0,1,2,3,\ldots\}$. 
Let $\alf$ be a non-empty set, called an \emph{alphabet}. The \emph{shift map} on $\alf^\N$ is the map $\sigma: \alf^\N\to \alf^\N$ given by $\sigma(x)=(y_n)$, where $x=(x_n)$ and  $y_n=x_{n+1}$. Elements of $\alf^*:=\bigcup_{k=0}^\infty \alf^k$ are called \emph{blocks} or \emph{words}, and $\omega$ stands for the empty word. Set $\alf^+=\alf^*\setminus\{\eword\}$. Given $\alpha\in\alf^*\cup\alf^{\N}$, $|\alpha|$ denotes the length of $\alpha$ and for $1\leq i,j\leq |\alpha|$, we define $\alpha_{i,j}:=\alpha_i\cdots\alpha_j$ if $i\leq j$, and $\alpha_{i,j}=\eword$ if $i>j$. If $\beta\in\alf^*$, then $\beta\alpha$ denotes concatenation of $\beta$ and $\alpha$. A subset $\osf \subseteq \alf^\N$ is \emph{invariant} for $\sigma$ if $\sigma (\osf)\subseteq \osf$. For an invariant subset $\osf \subseteq \alf^\N$, we define $\CL_n(\osf)$ as the set of all words of length $n$ that appear in some sequence of $\osf$, that is, $$\CL_n(\osf):=\{(a_0\ldots a_{n-1})\in \alf^n:\ \exists \ x\in \osf \text{ s.t. } (x_0\ldots x_{n-1})=(a_0\ldots a_{n-1})\}.$$ Clearly, $\CL_n(\alf^\N)=\alf^n$ and we always have that $\CL_0(\osf)=\{\omega\}$. The \emph{language} of $\osf$ is the set  $$\lang:=\bigcup_{n=0}^\infty\CL_n(\osf),$$ 
consisting of all finite words that appear in some sequence of $\osf$.

Given $F\subseteq \alf^*$, we define the \emph{subshift} $\osf_F\subseteq \alf^\N$ as the set of all sequences $x$ in $\alf^\N$ such that no word of $x$ belongs to $F$. Usually, the set $F$ will not play a role, so we will say $\osf$ is a subshift with the implication that $\osf=\osf_F$ for some $F$. We point out that subshifts are also called shift spaces in the literature. Given a subshift $\osf$ over an alphabet $\alf$ and $\alpha,\beta\in \lang$, we define \[C(\alpha,\beta):=\{\beta x\in\osf:\alpha x\in\osf\}.\] In particular, $Z_{\beta}:=C(\eword,\beta)$ is called the \emph{cylinder set} of $\beta$,  and $F_{\alpha}=C(\alpha,\eword)$ the \emph{follower set} of $\alpha$.

We briefly recall the construction of OTW-subshifts (OTW stands for Ott-Tomforde-Willis), see \cite{OTW}. Let $\alf$ be an alphabet and define $\tilde \alf:=\alf$ if $\alf$ is finite and $\tilde \alf:=\alf\cup\{\infty\}$ if $\alf$ is infinite, where $\infty$ is a new symbol not in $\alf$. Define \[ \Sigma_\alf := \{(x_i)_{i\in \N}\in \tilde \alf^\N: x_i = \infty \text{ implies }x_{i+1}= \infty \}, \] $\Sigma_\alf^{\text{inf}} := \alf^\N$ and $\Sigma_\alf^{\text{fin}} := \Sigma_\alf\setminus\Sigma_\alf^{\text{inf}}$. When the alphabet is infinite, the set $\Sigma_\alf^{\text{fin}}$ is identified with all finite sequences over $\alf$ via the identification
\begin{equation*}\label{xfin}
 (x_0x_1\dots x_k\infty\infty\infty\dots)\equiv (x_0x_1\ldots x_k).    
\end{equation*}
The sequence $(\infty \infty\infty\ldots)$ is denoted by $\vec{0}$. Let $F \subseteq  \alf^*$. We define
\begin{align*}
\osf^{inf}_F &:= \{ x \in \alf^\N : \text{ no block of $x$ is in $F$} \}, \text{ and } \\
\osf^{fin}_F &:= \{ x \in \Sigma_\alf^{\text{fin}} : \text{ there are infinitely many $a \in \alf$ for which}\\
& \qquad \qquad \qquad \qquad \text{there exists $y \in \alf^\N$ such that $xay \in \osf^{inf}_F$}\}. \\
\end{align*}
The \emph{OTW-subshift associated with $F$} is $\osf_F^{OTW}:= \osf^{inf}_F \cup \osf^{fin}_F$ together with a shift map $\sigma:\osf_F^{OTW}\to \osf_F^{OTW}$ defined by
\[\sigma(x)=\begin{cases}
x_1x_2\ldots, & \text{if }x=x_0x_1x_2\ldots\in\osf_F^{inf} \\
x_1\ldots x_{n-1}, & \text{if }x=x_0\ldots x_{n-1}\in\osf_F^{fin}\text{ and }n\geq 2 \\
\vec{0}, & \text{if }x=x_0\in\osf_F^{fin}\text{ or }x=\vec{0}.
\end{cases}
\]
In the case that $F=\emptyset$, we have that $\osf_F^{OTW}=\Sigma_\alf$, which we call the \textit{OTW full shift}. Notice that $\osf^{inf}_F$ coincides with the subshift $\osf_F$ defined above. As in the case of a subshift, we omit the subscript $F$ and write $\osf^{OTW}$ for $\osf^{OTW}_F$. Let $\osf^{OTW}$ be an OTW-subshift and $\alpha\in \lang$. Then the \emph{follower set} of $\alpha$ is defined as the set \[\CF(\alpha):=\{y\in \osf^{OTW}: \alpha y \in \osf^{OTW}\}. \] For a finite set $F\subseteq \alf$ and $\alpha\in  \lang$, we define the \emph{generalised cylinder set} as
\[ \CZ(\alpha,F):=\{y\in\osf^{OTW}:\ y_i=\alpha_i\ \forall \, 1\leq i\leq |\alpha|,\ y_{|\alpha|+1}\notin F\}. \]
To simplify notation, we denote  $\CZ(\alpha,\emptyset)$ by $\CZ(\alpha)$. Endowed with the topology generated by the generalised cylinders, $\osf^{OTW}$ is a compact totally disconnected Hausdorff space and, in this topology, the generalised cylinders are compact and open. The shift map is continuous everywhere, except possibly at $\vec{0}$ (if $\vec{0}\in\osf^{OTW}$). If the alphabet is finite, then $\osf^{OTW}$ is the usual subshift and the topology given by the generalised cylinders is the relative product topology (see \cite[Remark~2.26]{OTW}).

\section{C*-algebras of subshifts}\label{c-subshift}
In this section, we define a unital and a not necessary unital C*-algebra associated with a subshift. Fix a subshift $\osf$ over an alphabet $\alf$.

\subsection{Unital C*-algebras of subshifts}\label{unital}
As in \cite{BCGW22}, let $\TCB$ to be the Boolean algebra of subsets of $\osf$ generated by all $C(\alpha,\beta)$ for $\alpha,\beta\in\lang$, that is, each element of $\TCB$ is a finite union of elements of the form \[C(\alpha_1,\beta_1)\cap\ldots\cap C(\alpha_n,\beta_n)  \cap C(\mu_1,\nu_1)^c\cap \ldots \cap C(\mu_m,\nu_m)^c.\]

\begin{definition}\label{quente}
The \textit{unital subshift C*-algebra $\ucsalgshift$ associated with $\osf$} is the universal unital C*-algebra generated by projections $\{p_A: A\in\TCB\}$ and partial isometries $\{s_a: a\in\alf\}$ subject to the relations:
\begin{enumerate}[(i)]
    \item $p_{\osf}=1$, $p_{A\cap B}=p_Ap_B$, $p_{A\cup B}=p_A+p_B-p_{A\cap B}$ and $p_{\emptyset}=0$, for every $A,B\in\TCB$;
    \item $s_{\beta}s^*_{\alpha}s_{\alpha}s^*_{\beta}=p_{C(\alpha,\beta)}$ for all $\alpha,\beta\in\lang$, where $s_{\eword}:=1$ and, for $\alpha=\alpha_1\ldots\alpha_n\in\lang$, $s_\alpha:=s_{\alpha_1}\cdots s_{\alpha_n}$.
\end{enumerate}
\end{definition}

\begin{remark}
    In \cite{CarlsenShift}, Carlsen defines a C*-algebra for subshifts over finite alphabets and proves it has a universal property. The definition above adapts the universal property described by Carlsen and generalises his definition to include infinite alphabets.
\end{remark}

As in \cite[Remark 3.4]{BCGW22}, $s_\alpha^* s_\alpha = p_{C(\alpha,\omega)}=p_{F_\alpha}$ and $s_\beta s_\beta^* = p_{C(\omega,\beta)}= p_{Z_\beta}$ for all $\alpha,\beta \in \lang$.

The following result is proved in \cite[Proposition 3.6]{BCGW22} for the unital algebra of a subshift. Since the same proof holds in the C*-algebraic case, we omit it here.

\begin{proposition}\label{lemma.algebra.unital} In $\ucsalgshift$ the following hold:
\begin{enumerate}[(i)]
    \item $s_a^*s_b= \delta_{a,b} p_{F_a}$, for all $a,b\in \alf$;
    \item $s_\alpha^*s_\alpha$ and $s_\beta^*s_\beta$ commute for all $\alpha,\beta\in\lang$;
    \item $s_\alpha^*s_\alpha$ and $s_\beta s_\beta^*$ commute for all $\alpha,\beta\in\lang$;
    \item $s_\alpha s_\beta=0$ for all $\alpha,\beta\in\lang$ such that $\alpha\beta\notin\lang$;
    \item $\ucsalgshift$ is generated by the set $\{s_a, s_a^*: a\in\alf\}\cup\{1\}$.
\end{enumerate}
\end{proposition}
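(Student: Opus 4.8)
The plan is to extract everything from the Boolean relations (i), from relation (ii), and from the two consequences recorded just before the statement, namely $s_\alpha^* s_\alpha = p_{F_\alpha}$ and $s_\beta s_\beta^* = p_{Z_\beta}$ for $\alpha,\beta\in\lang$. First I would record two standing facts. By relation (i) the assignment $A\mapsto p_A$ is a homomorphism of Boolean algebras, so $p_Ap_B=p_{A\cap B}=p_Bp_A$; hence all the $p_A$, and in particular all source and range projections $p_{F_\alpha}$ and $p_{Z_\beta}$, lie in a single commutative C*-subalgebra. Also, since $\osf$ is shift-invariant, $\lang$ is closed under taking factors (in particular prefixes), and I work under the usual convention that every symbol of $\alf$ occurs in $\osf$, so that single letters lie in $\lang$. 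With this in hand, parts (ii) and (iii) are immediate: $s_\alpha^* s_\alpha=p_{F_\alpha}$, $s_\beta^* s_\beta=p_{F_\beta}$ and $s_\beta s_\beta^*=p_{Z_\beta}$ are all projections of the form $p_A$, hence commute.

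For part (i), the case $a=b$ is exactly $s_a^* s_a=p_{F_a}$. For $a\neq b$ I would use that every sequence in $Z_a$ begins with $a$ and every sequence in $Z_b$ begins with $b$, so $Z_a\cap Z_b=\emptyset$ and $p_{Z_a}p_{Z_b}=p_\emptyset=0$. Combining this with the partial-isometry identities $s_a^*=s_a^*(s_as_a^*)$ and $s_b=(s_bs_b^*)s_b$ gives
\[ s_a^* s_b = s_a^*(s_as_a^*)(s_bs_b^*)s_b = s_a^* p_{Z_a}p_{Z_b} s_b = s_a^* p_{Z_a\cap Z_b} s_b = 0. \]

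Part (iv) is where the real work lies, and I expect it to be the main obstacle. Since $s_\alpha s_\beta=s_{\alpha\beta}$ by definition, write $\gamma=\alpha\beta\notin\lang$. Because $\lang$ is prefix-closed and $\omega\in\lang$, $\gamma$ has a shortest prefix not in $\lang$, say $\mu a$ with $\mu\in\lang$ and $a\in\alf$; factoring $\gamma=\mu a\gamma'$ reduces the claim to $s_{\mu a}=s_\mu s_a=0$. The tempting move, applying relation (ii) to read off $s_{\mu a}s_{\mu a}^*=p_{Z_{\mu a}}$, is not available precisely because $\mu a\notin\lang$, so relation (ii) cannot be invoked for the word $\mu a$. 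Instead I would argue combinatorially: $\mu a\notin\lang$ means no sequence of $\osf$ has $\mu a$ as a factor, so no element of the follower set $F_\mu=\{y\in\osf:\mu y\in\osf\}$ begins with $a$, that is, $F_\mu\cap Z_a=\emptyset$. Then, using $s_\mu p_{F_\mu}=s_\mu$ and $p_{Z_a}s_a=s_a$,
\[ s_\mu s_a = s_\mu\,p_{F_\mu}p_{Z_a}\,s_a = s_\mu\, p_{F_\mu\cap Z_a}\, s_a = s_\mu\, p_\emptyset\, s_a = 0, \]
whence $s_\gamma=s_{\mu a}s_{\gamma'}=0$.

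Finally, for part (v) I would observe that each element of $\TCB$ is, by definition, a finite union of finite intersections of sets $C(\alpha,\beta)$ and their complements; relation (i) then expresses the corresponding projection $p_A$ through products, the inclusion--exclusion formula, and the complement rule $p_{A^c}=1-p_A$, all applied to the projections $p_{C(\alpha,\beta)}$. Since relation (ii) writes $p_{C(\alpha,\beta)}=s_\beta s_\alpha^* s_\alpha s_\beta^*$, every $p_A$ already lies in the C*-subalgebra generated by $\{s_a,s_a^*:a\in\alf\}\cup\{1\}$. As the $p_A$ together with the $s_a$ generate $\ucsalgshift$, this subalgebra is everything, proving (v).
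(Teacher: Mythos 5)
Your proof is correct, and in substance it is the argument the paper intends: the paper gives no inline proof, deferring to the algebraic case \cite[Proposition~3.6]{BCGW22}, and your manipulations --- orthogonality $Z_a\cap Z_b=\emptyset$, commutativity of all the $p_A$ via relation (i), vanishing of a product through an empty set in $\TCB$, and inclusion--exclusion together with $p_{A^c}=1-p_A$ for generation --- are exactly the ones that proof runs on. The one genuine divergence is part (iv), which you single out as the main obstacle but which is in fact no harder than part (i): since the hypothesis of (iv) already has $\alpha,\beta\in\lang$, relation (ii) gives $s_\alpha^*s_\alpha=p_{F_\alpha}$ and $s_\beta s_\beta^*=p_{Z_\beta}$, and $\alpha\beta\notin\lang$ forces $F_\alpha\cap Z_\beta=\emptyset$, so $s_\alpha s_\beta=s_\alpha(s_\alpha^*s_\alpha)(s_\beta s_\beta^*)s_\beta=s_\alpha p_{F_\alpha\cap Z_\beta}s_\beta=0$ directly; your reduction to a shortest prefix $\mu a\notin\lang$ is valid but unnecessary, its only payoff being the marginally more general fact that $s_\gamma=0$ for \emph{every} word $\gamma\notin\lang$, not just those split as $\alpha\beta$ with $\alpha,\beta\in\lang$. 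Two small points of care. First, both your route and the direct one use $s_\mu s_\mu^*s_\mu=s_\mu$ for a \emph{word} $\mu\in\lang$; this is not among the stated relations (only the letters $s_a$ are declared partial isometries), but it is automatic in the C*-setting because $s_\mu^*s_\mu=p_{F_\mu}$ is a projection, whence $\|s_\mu(1-p_{F_\mu})\|^2=\|(1-p_{F_\mu})p_{F_\mu}(1-p_{F_\mu})\|=0$ --- this deserves an explicit line. Second, your standing convention that every letter of $\alf$ lies in $\lang$ is a legitimate catch rather than pedantry: for $a\notin\lang$ the set $F_a$ is not even defined and relation (ii) imposes nothing on $s_a$, so the statement ``for all $a,b\in\alf$'' in item (i) implicitly requires that convention, a point the paper glosses over.
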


Next, we show that the unital algebra $\ualgshift$ defined in \cite{BCGW22} can be seen as a dense subalgebra of $\ucsalgshift$. For that, we first build a representation that shows that each projection $p_A$, with $A\in\TCB\setminus\{\emptyset\}$, is non-zero. Then, we consider the canonical gauge action on $\ucsalgshift$, in order to obtain a $\mathbb{Z}$-grading on $\ucsalgshift$.

\begin{proposition}\label{luigikart}
    Consider the family of operators $\{S_a\}_{a\in\alf}$ and $\{P_A\}_{A\in\TCB}$ on $\ell^2(\osf)$ defined by
    \[S_a(\delta_x)=\begin{cases}
    \delta_{ax}, & \text{if }x\in F_a \\
    0 & \text{otherwise;}
    \end{cases}\qquad
    P_A(\delta_x)=\begin{cases}
    \delta_x, & \text{if }x\in A\\
    0, & \text{otherwise,}
    \end{cases}\]
    where $\{\delta_x\}_{x\in\osf}$ is the canonical orthonormal basis of $\ell^2(\osf)$. Then, there exists a *-representation $\pi:\ucalgshift\to B(\ell^2(X))$ such that $\pi(s_a)=S_a$ for all $a\in\alf$ and $\pi(p_A)=P_A$ for all $A\in\TCB$.
\end{proposition}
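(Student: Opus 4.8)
The plan is to invoke the universal property of $\ucalgshift$ stated in Definition~\ref{quente}: it suffices to check that the operators $\{S_a\}_{a\in\alf}$ and $\{P_A\}_{A\in\TCB}$ are well defined, bounded, and satisfy relations (i) and (ii). Each $P_A$ is visibly the orthogonal projection onto $\cvecspan\{\delta_x:x\in A\}$, hence a projection of norm at most $1$; relation (i) then reduces to the pointwise characteristic-function identities $\chi_{A\cap B}=\chi_A\chi_B$, $\chi_{A\cup B}=\chi_A+\chi_B-\chi_{A\cap B}$, and $\chi_{\emptyset}=0$ read off on the diagonal operators $P_A$, together with $P_{\osf}=I$ since $\osf$ indexes the whole basis. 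Each $S_a$ maps a subset of the orthonormal basis injectively onto another subset (and the rest to $0$), so it is a partial isometry of norm at most $1$; a short adjoint computation gives $S_a^*(\delta_y)=\delta_{\sigma(y)}$ if $y\in Z_a$ and $0$ otherwise.

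The heart of the argument is to understand the composite operators $S_\alpha=S_{\alpha_1}\cdots S_{\alpha_n}$. I would prove by induction on $|\alpha|$ that
\[
S_\alpha(\delta_x)=\begin{cases}\delta_{\alpha x}, & \alpha x\in\osf,\\ 0,&\text{otherwise,}\end{cases}
\qquad
S_\alpha^*(\delta_y)=\begin{cases}\delta_{\sigma^{|\alpha|}(y)}, & y\in Z_\alpha,\\ 0,&\text{otherwise.}\end{cases}
\]
The key point that makes the inductive step clean is the shift-invariance of $\osf$: whenever the full word $\alpha x$ lies in $\osf$, every suffix $\alpha_i\cdots\alpha_n x=\sigma^{i-1}(\alpha x)$ also lies in $\osf$, so no intermediate factor of the product annihilates $\delta_x$; conversely, if $\alpha x\notin\osf$, then the outermost factor that first leaves $\osf$ produces $0$. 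From the first formula one reads off immediately that $S_\alpha^*S_\alpha=P_{F_\alpha}$, recovering the expected initial projection.

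Relation (ii) then follows by a direct evaluation on basis vectors. Writing $S_\beta S_\alpha^* S_\alpha S_\beta^*=S_\beta P_{F_\alpha}S_\beta^*$ and applying it to $\delta_y$, the factor $S_\beta^*$ is nonzero precisely when $y\in Z_\beta$, in which case it returns $\delta_z$ with $z=\sigma^{|\beta|}(y)$ and $y=\beta z$; the middle projection $P_{F_\alpha}$ keeps $\delta_z$ exactly when $\alpha z\in\osf$; and $S_\beta$ then restores $\delta_{\beta z}=\delta_y$. Hence $S_\beta P_{F_\alpha}S_\beta^*(\delta_y)=\delta_y$ if and only if $y=\beta z$ with $z\in\osf$ and $\alpha z\in\osf$, that is, if and only if $y\in C(\alpha,\beta)$, and it is $0$ otherwise; this is exactly $P_{C(\alpha,\beta)}$. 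The degenerate cases $\alpha=\eword$ or $\beta=\eword$ are absorbed by the conventions $S_\eword=I$. Having verified all the relations, the universal property of $\ucalgshift$ supplies the desired $*$-representation $\pi$ with $\pi(s_a)=S_a$ and $\pi(p_A)=P_A$.

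I expect the only genuine bookkeeping --- the main obstacle --- to be the inductive identification of $S_\alpha$ and $S_\alpha^*$ on basis vectors, precisely because one must argue that the partial products never inadvertently vanish or land outside the basis; this is exactly the place where shift-invariance of $\osf$ is used, and where domain or indexing errors would most easily creep in. Everything else, including the verification of relation (i) and the boundedness of the generators, is formal.
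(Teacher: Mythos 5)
Your proposal is correct and takes essentially the same route as the paper's own proof: check relations (i) and (ii) of Definition~\ref{quente} for the concrete operators $P_A$ and $S_a$, identify the action of $S_\alpha$ and $S_\alpha^*$ on basis vectors, and then invoke the universal property of $\ucalgshift$. The paper simply compresses the inductive bookkeeping you spell out (it notes each $S_a$ is an isometry from $\cvecspan\{\delta_x:x\in F_a\}$ onto $\cvecspan\{\delta_x:x\in Z_a\}$ and asserts the formulas for $S_\alpha$, $S_\alpha^*$ and relation (ii) follow ``immediately''), whereas you make explicit the role of shift-invariance in showing the partial products never vanish prematurely.
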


\begin{proof}
It is straightforward to check that $P_A$ is a projection, $P_{\osf}=1$, $P_{A\cap B}=P_AP_B$, $P_{A\cup B}=P_A+P_B-P_{A\cap B}$ and $P_{\emptyset}=0$, for every $A,B\in\TCB$.

Notice that, for every $a\in\alf$, $S_a$ is an isometry between the subspaces $\cvecspan\{\delta_x:x\in F_{a}\}$ and $\cvecspan\{\delta_x:x\in Z_a\}$. In particular, $S_a$ is a partial isometry, whose adjoint is given by
\[S_a^*(\delta_x)=\begin{cases}
    \delta_{y}, & \text{if }x=ay\in Z_a \\
    0, & \text{otherwise.}
\end{cases}\]

Now, for $\alpha=\alpha_1\cdots\alpha_n\in\lang$, defining $S_{\alpha}:=S_{\alpha_1}\cdots S_{\alpha_n}$, we obtain similar formulas for $S_\alpha$ and $S^*_\alpha$. It follows immediately that $S_{\beta}S_{\alpha}^*S_{\alpha}S^*_{\beta}=P_{C(\alpha,\beta)}$.

The existence of the representation comes from the universal property of $\ucalgshift$.
\end{proof}

\begin{corollary}\label{mariowonder}
    For all nonempty $A\in\TCB$, we have that $p_A\neq 0$. 
\end{corollary}

\begin{proof}
    For $A\in\TCB$ such that $A\neq\emptyset$, there exists $x\in A$. Using $P_A$ as defined in Proposition~\ref{luigikart}, we have that $P_A(\delta_x)=\delta_x$, so that $P_A\neq 0$. It then follows that $p_A\neq 0$.
\end{proof}

We obtain an action of the circle $\mathbb{T}$ on $\ucsalgshift$ as follows. For each $z\in\mathbb{T}$, we define  $\gamma_z:\ucsalgshift\to\ucsalgshift$ on the generators by $\gamma_z(p_A)=p_A$ for all $A\in\TCB$, and $\gamma_z(s_a)=zs_a$ for all $a\in\alf$. For $\alpha=\alpha_1\cdots\alpha_n$ and $\beta=\beta_1\cdots\beta_m\in\lang$, we have that
\[zs_{\beta_1}\cdots zs_{\beta_m}(zs_{\alpha_n})^*\cdots(zs_{\alpha_1})^*zs_{\alpha_1}\cdots zs_{\alpha_n}(zs_{\beta_m})^*\cdots(zs_{\beta_1})^*=z^{m-n+n-m}p_{C(\alpha,\beta)}=p_{C(\alpha,\beta)}.\] Using the universal property of $\ucsalgshift$, we get that $\gamma_z$ is a *-homomorphism. It is clear that $\gamma_1$ is the identity,  $\gamma_{zw}=\gamma_z\circ\gamma_w$ for every $z,w\in\mathbb{T}$, and that $\gamma_z$ is an automorphism with $\gamma_z^{-1}=\gamma_{z^{-1}}$. We call the action $\gamma$ the gauge action and observe that it induces a $C^*$-grading on $\ucsalgshift$ by $\mathbb{Z}$, with fibers given by $$D_n:=\{x\in\ucsalgshift:\gamma_z(x)=z^nx\ \forall z\in\mathbb{T}\},$$ for each $n\in\mathbb{Z}$. 

\begin{remark}\label{algebraic.involution}
    In several of the following results, we consider an algebraic version $\TCA_{\C}(\osf)$ of $\ucsalgshift$ and its subalgebras, as defined in \cite[Definition~3.3]{BCGW22}. The algebras in \cite{BCGW22} are defined over a general ring $R$. When we specialize to the case $R=\C$, we can consider the natural involution on $\TCA_{\C}(\osf)$ defined by $(\lambda s_\alpha p_As_{\beta}^*)^*= \bar{\lambda} s_\beta p_As_{\alpha}^*$.
\end{remark}

In the next result, we show that there is an injective homomorphism from the unital subshift algebra $\TCA_{\C}(\osf)$ into $\ucsalgshift$.

\begin{proposition}\label{zelda}
    The canonical homomorphism $\TCA_{\C}(\osf) \to \ucsalgshift$ that sends each generator to its corresponding generator with the same name is injective and has a dense image.
\end{proposition}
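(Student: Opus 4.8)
The plan is to treat density and injectivity separately: density follows from a generation argument, and injectivity is obtained by exploiting the $\mathbb{Z}$-gradings both algebras carry and reducing everything to the commutative diagonal. For the dense image, observe that the image of the canonical homomorphism $\Phi\colon\TCA_{\C}(\osf)\to\ucsalgshift$ is a unital $*$-subalgebra of $\ucsalgshift$ containing every $s_a$ and $s_a^*$ with $a\in\alf$; by Proposition~\ref{lemma.algebra.unital}(v) these elements together with $1$ generate $\ucsalgshift$ as a C*-algebra, so the norm closure of the image is all of $\ucsalgshift$.

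For injectivity, I would first record that $\Phi$ is $\mathbb{Z}$-graded. The algebra $\TCA_{\C}(\osf)$ is graded by assigning $s_\alpha p_A s_\beta^*$ the degree $|\alpha|-|\beta|$, while $\ucsalgshift$ is graded by the gauge fibers $D_n$; since $\gamma_z(s_\alpha p_A s_\beta^*)=z^{|\alpha|-|\beta|}s_\alpha p_A s_\beta^*$, the map $\Phi$ carries the degree-$n$ component into $D_n$. The spectral projections $E_n(a)=\int_{\mathbb{T}}z^{-n}\gamma_z(a)\,dz$ onto $D_n$ then show that $\ker\Phi$ is a graded ideal: if $x=\sum_n x_n$ is the homogeneous decomposition of an element of the kernel, then $\Phi(x_n)=E_n(\Phi(x))=0$ for every $n$. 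Thus it suffices to prove that $\Phi$ is injective on each homogeneous component, and in particular no C*-completion of the codomain is needed at this stage.

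Next I would reduce to the diagonal $\TCD_{\C}(\osf)$. Relation~(ii) of Definition~\ref{quente} collapses every diagonal word into the commutative algebra generated by the $p_A$; for instance $s_\alpha p_{C(\mu,\nu)}s_\alpha^*=s_{\alpha\nu}s_\mu^*s_\mu s_{\alpha\nu}^*=p_{C(\mu,\alpha\nu)}$, so that $\TCD_{\C}(\osf)=\vecspan\{p_A:A\in\TCB\}$. On this diagonal $\Phi$ is injective: writing an element as a finite combination $\sum_i c_i p_{A_i}$ with the $A_i\in\TCB$ pairwise disjoint and nonempty, the representation $\pi$ of Proposition~\ref{luigikart} gives $\pi(\sum_i c_i p_{A_i})\delta_x=c_j\delta_x\neq 0$ for $x\in A_j$, which is the quantitative form of Corollary~\ref{mariowonder}. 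The passage from the diagonal to each homogeneous component is then supplied by the graded-uniqueness result for $\TCA_{\C}(\osf)$ of \cite{BCGW22}: a $\mathbb{Z}$-graded homomorphism out of $\TCA_{\C}(\osf)$ that is injective on $\TCD_{\C}(\osf)$ is injective. Applying this to $\Phi$ finishes the proof.

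The main obstacle is precisely this last implication, the propagation of faithfulness from the diagonal to the full algebra. Since $\TCA_{\C}(\osf)$ carries no norm or positivity, one cannot argue through $x^*x$ as in the C*-setting; the gauge grading cleanly reduces the problem to homogeneous pieces, but controlling an individual (in particular off-diagonal, degree-$0$) component genuinely requires the combinatorial normal form and the faithful conditional expectation onto $\TCD_{\C}(\osf)$ established in \cite{BCGW22}, which is exactly what the cited graded-uniqueness result packages. The points deserving the most care are verifying that the algebraic grading of $\TCA_{\C}(\osf)$ is transported correctly onto the gauge grading through $\Phi$, and that $\TCD_{\C}(\osf)$ is indeed the correct coefficient algebra on which diagonal faithfulness must be checked.
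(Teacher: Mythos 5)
Your proof is correct and takes essentially the same route as the paper: density because the image contains the generators (Proposition~\ref{lemma.algebra.unital}(v)), and injectivity by checking that the homomorphism is $\mathbb{Z}$-graded with respect to the gauge fibers $D_n$, verifying faithfulness on the projections via the representation of Proposition~\ref{luigikart} (i.e.\ Corollary~\ref{mariowonder}), and then invoking the Graded Uniqueness Theorem of \cite{BCGW22}. Your additional material (the spectral projections $E_n$ showing the kernel is graded, and the collapse of the diagonal to $\vecspan\{p_A : A\in\TCB\}$) is a sound unpacking of what that cited theorem already packages, not a genuinely different argument.
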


\begin{proof}
    For $x=s_\alpha p_As_{\beta}^*\in\ucsalgshift$ and $z\in\mathbb{T}$, we have that $\gamma_z(x)=z^{|\alpha|-|\beta|}x$ so that $x\in D_{|\alpha|-|\beta|}$. By \cite[Proposition~3.9]{BCGW22}, the canonical homomorphism $\TCA_{\C}(\osf) \to \ucsalgshift$ is a $\mathbb{Z}$-graded ring homomorphism and by Corollary~\ref{mariowonder}, $\lambda p_A\neq0$ for every $\lambda\in \mathbb{C}\setminus\{0\}$ and $A\in\TCB\setminus\{\emptyset\}$. It follows from the Graded Uniqueness Theorem \cite[Corollary~3.15]{BCGW22} that the homomorphism is injective.
    
    The second part is due to the fact that the copy of $\TCA_{\C}(\osf)$ inside $\ucsalgshift$ contains the generators of $\ucsalgshift$.
\end{proof}

Let $\osf^{OTW}$ be as in Section~\ref{symbolic} and $\widehat{\TCB}$ the Stone dual of $\TCB$. To characterize $C(\osf^{OTW})$ and $C(\HTCB)$ we will use the results in \cite{BCGW22} and the theory of core subalgebras introduced in \cite{EGG}, which we briefly recall below.

\begin{definition}\cite[Definition~3.1]{EGG}
Let $A$ be a C*-algebra and $B\subseteq A$ a (not necessarily closed) *-subalgebra. We say that $B$ is a \emph{core subalgebra of $A$} when every representation of $B$ is continuous relative to the norm induced from $A$. By a representation of a *-algebra $B$ we mean a multiplicative, *-preserving, linear map $\pi:B\to {\CB}(H)$, where $H$ is a Hilbert space.
\end{definition}

\begin{remark}\label{r:core_subalg_univ}
    By \cite[Example~3.3(v)]{EGG}, whenever $A$ is a universal C*-algebra generated by generators and relations and $B$ is the free *-algebra with the same generators and relations, then the image of $B$ inside $A$ is a core subalgebra. In particular, this applies to both graph algebras, ultragraph algebras and subshift algebras.
\end{remark}

For examples of core subalgebras, we refer the reader to \cite{EGG}. The following result, which will be useful to us, is proved in \cite{EGG}.

\begin{proposition}\cite[Proposition~3.4]{EGG}\label{core iso}  Suppose that $A_1$ and $A_2$ are C*-algebras and that $B_i$ is a dense core subalgebra of $A_i$, for $i=1,2$. If $B_1$ and $B_2$ are isomorphic as *-algebras, then $A_1$ and $A_2$ are isometrically *-isomorphic.
\end{proposition}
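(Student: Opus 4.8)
The plan is to recover the C*-norm of each $A_i$ intrinsically, purely from the *-algebra structure of its dense core subalgebra $B_i$, and then transport it across the given isomorphism $\varphi\colon B_1\to B_2$. The key external fact is Gelfand--Naimark: for any C*-algebra $A$ and any $a\in A$ one has $\|a\|=\sup_\pi\|\pi(a)\|$, where $\pi$ ranges over the *-representations $\pi\colon A\to\CB(H)$, the supremum being attained on a faithful representation. First I would establish that the core hypothesis produces a bijective correspondence between the *-representations of $B_i$ and those of $A_i$: restriction sends a representation of $A_i$ to one of $B_i$, and conversely a representation $\rho$ of $B_i$ is continuous for the norm inherited from $A_i$ (this is exactly the definition of core subalgebra), so by density of $B_i$ it extends uniquely to a *-representation $\bar\rho$ of $A_i$ with $\bar\rho|_{B_i}=\rho$.

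From this correspondence I would derive, for each $b\in B_1$, the \emph{intrinsic} norm formula
\[ \|b\|_{A_1}=\sup\bigl\{\|\rho(b)\|:\rho\text{ a *-representation of }B_1\bigr\}. \]
For the inequality $\le$, a faithful representation of $A_1$ restricts to a representation of $B_1$ realizing $\|b\|_{A_1}$; for $\ge$, each $\rho$ extends to $\bar\rho$, which is contractive as a *-homomorphism of C*-algebras, so $\|\rho(b)\|=\|\bar\rho(b)\|\le\|b\|_{A_1}$. The left-hand side is finite because $b\in A_1$, so the supremum is well defined, and its right-hand side depends only on the *-algebra $B_1$.

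It then remains to transport this formula through $\varphi$. Since $\varphi$ is a *-isomorphism, the assignment $\rho'\mapsto\rho'\circ\varphi$ is a bijection from the *-representations of $B_2$ onto those of $B_1$, whence
\[ \|b\|_{A_1}=\sup_{\rho'}\|(\rho'\circ\varphi)(b)\|=\sup_{\rho'}\|\rho'(\varphi(b))\|=\|\varphi(b)\|_{A_2} \]
for every $b\in B_1$. Thus $\varphi$ is isometric for the C*-norms inherited from $A_1$ and $A_2$; being an isometric *-isomorphism between dense *-subalgebras, it extends uniquely to an isometric *-isomorphism $A_1\to A_2$, as required.

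The main obstacle I anticipate is not analytic but organizational: confirming the representation correspondence and handling the supremum cleanly. Although the class of all *-representations is a proper class, the quantity $\sup_\rho\|\rho(b)\|$ is a well-defined real number, bounded above by $\|b\|_{A_i}$ because every representation of a C*-algebra is contractive, and attained on a faithful representation (for instance the universal one), so no genuine set-theoretic difficulty arises. One must also confirm that the continuous extension $\bar\rho$ of a representation of $B_i$ is genuinely a *-representation of $A_i$: this holds because a bounded *-homomorphism defined on a dense *-subalgebra extends by continuity to a *-homomorphism of the completion. Once these points are settled the argument is formal, with the entire weight resting on the core hypothesis, which is precisely what turns the analytic datum $\|\cdot\|_{A_i}$ into an intrinsic algebraic invariant of $B_i$.
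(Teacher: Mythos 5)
Your proof is correct: the intrinsic norm formula $\|b\|_{A_i}=\sup_\rho\|\rho(b)\|$ (valid by the core property, which lets every representation of $B_i$ extend to a necessarily contractive representation of $A_i$, with the supremum attained by restricting a faithful representation) is exactly the mechanism behind this result, and transporting it through the isomorphism yields the isometric extension. Note that the paper itself gives no proof, citing \cite[Proposition~3.4]{EGG} instead; the argument there is the same in substance, just organized more directly (compose the isomorphism $B_1\to B_2$ with a faithful representation of $A_2$, extend by the core property, and symmetrize), so your sup-over-representations packaging is only a cosmetic variant.
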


\begin{lemma}\label{core projections}
    Let $A$ be a C*-algebra and $B$ a commutative *-algebra generated by projections of $A$. Then $B$ is a core subalgebra of $A$.
\end{lemma}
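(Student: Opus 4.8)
The plan is to show that any representation $\pi\colon B\to\CB(H)$ is automatically norm-contractive with respect to the norm induced from $A$, which is precisely the definition of a core subalgebra. The key structural fact I would exploit is that $B$ is a commutative $*$-algebra generated by projections; consequently every element of $B$ is a finite linear combination of projections, and in fact the projections in $B$ generate a commutative (possibly non-unital) subalgebra whose elements can be analyzed via the Boolean/lattice structure of the generating projections. Since commuting projections generate a commutative algebra in which products and differences of projections are again expressible in terms of a finite Boolean combination, the essential point is to control norms of such combinations.

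First I would reduce to a finite setting: any given $b\in B$ lies in the $*$-subalgebra generated by finitely many projections $q_1,\ldots,q_n$ of $A$. Because these projections commute (as $B$ is commutative), they generate a finite-dimensional commutative $*$-subalgebra $B_0\subseteq B$, whose minimal projections are the nonzero products $r_S=\prod_{i\in S}q_i\prod_{j\notin S}(1-q_j)$ (formed inside the unitization of $A$ if needed, though the relevant products land back in the algebra generated by the $q_i$). These minimal projections are mutually orthogonal. I would then write $b=\sum_S \lambda_S r_S$ for suitable scalars $\lambda_S$, and note that because the $r_S$ are orthogonal projections in $A$, the $A$-norm of $b$ is exactly $\max_S|\lambda_S|$.

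Next I would analyze $\pi(b)$. Since $\pi$ is a $*$-homomorphism it sends projections to projections and preserves orthogonality, so $\pi(r_S)$ are mutually orthogonal projections in $\CB(H)$. Hence $\pi(b)=\sum_S\lambda_S\pi(r_S)$ is a linear combination of orthogonal projections, giving $\|\pi(b)\|_{\CB(H)}=\max_{S:\pi(r_S)\neq 0}|\lambda_S|\le\max_S|\lambda_S|=\|b\|_A$. This establishes that $\pi$ is contractive, hence continuous, relative to the $A$-norm, which is exactly what is needed to conclude that $B$ is a core subalgebra of $A$.

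The main subtlety to handle carefully is the bookkeeping around unitality: if $A$ is non-unital (or if $1\notin B$), the complementary factors $1-q_j$ must be interpreted in the unitization $\widetilde A$, and one must verify that the minimal idempotents $r_S$ actually lie in $B$ (equivalently, that the cross terms combine to cancel the adjoined unit). This is routine since each $r_S$ expands to a polynomial in the $q_i$ with the pure unit term appearing only in $r_\emptyset$, and one argues on the orthogonal pieces that genuinely belong to $B$; the element $b$ itself, being in $B$, has no component along an adjoined unit unless $B$ is unital. I expect this unital-versus-nonunital normalization to be the only place requiring care, with the norm computation on orthogonal projections being the clean core of the argument.
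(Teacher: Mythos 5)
Your proof is correct and takes essentially the same route as the paper's: both rest on writing an element of $B$ as a finite linear combination of mutually orthogonal projections, noting that the $A$-norm of such a combination is the maximum modulus of the coefficients, and concluding that any representation (which sends orthogonal projections to orthogonal projections) is contractive. The only difference is that the paper cites this orthogonal decomposition from \cite[Lemma 4.1]{MR4343791}, whereas you derive it directly via the minimal projections $r_S$ of the finitely many commuting generators, with the (correct) unitization bookkeeping that only $r_\emptyset$ fails to lie in $B$.
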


\begin{proof}
    Any non-zero element of $x\in B$ can be written as linear combination $x=\sum_{i=1}^n\lambda_i p_i$, where $\lambda_i\in\C\setminus\{0\}$ for all $i=1,\ldots,n$ and $p_1,\ldots,p_n$ are mutually orthogonal projections in $B$ (see for instance \cite[Lemma 4.1]{MR4343791}). Then, $\|x\|=\max_{i=1,\ldots,n}|\lambda_i|$ and, for any representation $\pi$ of $B$, we have that $\pi(p_1),\ldots,\pi(p_n)$ are mutually orthogonal projections so that $\|\pi(x)\|\leq \max_{i=1,\ldots,n}|\lambda_i|=\|x\|$. Therefore, $\pi$ is continuous with respect to the norm induced from $A$.
\end{proof}

For a topological space $X$, let $\Lc(X,\C)$ denote the locally constant functions from $X$ to $\C$ with compact support. By a Stone space, we mean a locally compact, totally disconnected Hausdorff space.

\begin{lemma}\label{core lc}
    Let $X$ be a Stone space. Then, $\Lc(X,\C)$  is a dense core subalgebra of $C_0(X)$.
\end{lemma}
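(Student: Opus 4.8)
The plan is to reduce both assertions to the single structural fact that, in a Stone space, the compact open sets form a basis for the topology, and then to invoke Lemma~\ref{core projections} for the core-subalgebra statement.

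First I would show that $\Lc(X,\C)$ is exactly the commutative *-algebra generated by projections of $C_0(X)$. The key observation is that every $f\in\Lc(X,\C)$ takes only finitely many values: since $f$ is locally constant, each level set $f^{-1}(\{c\})$ is open and, for $c\neq 0$, its complement is the union of the remaining level sets and hence also open, so $f^{-1}(\{c\})$ is clopen; being contained in the compact support of $f$, it is compact. The nonzero level sets are open subsets of the compact support, and since they are disjoint and each must occur in any finite subcover of the support drawn from the partition into level sets, there are only finitely many of them. Thus $f=\sum_{c\neq 0}c\,1_{f^{-1}(\{c\})}$ is a finite linear combination of indicator functions $1_U$ with $U$ compact open, each of which is a projection in $C_0(X)$. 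Since $1_U1_V=1_{U\cap V}$ and intersections of compact opens are compact open, the linear span of these indicators is already a *-subalgebra, so $\Lc(X,\C)$ coincides with the commutative *-algebra generated by $\{1_U:U\text{ compact open}\}$. Lemma~\ref{core projections} then immediately gives that $\Lc(X,\C)$ is a core subalgebra of $C_0(X)$.

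For density, I would fix $f\in C_0(X)$ and $\varepsilon>0$ and construct an approximating step function. The set $K=\{x:|f(x)|\geq\varepsilon/2\}$ is compact because $f$ vanishes at infinity. Using continuity of $f$ together with the basis of compact open sets, I would cover $K$ by finitely many compact opens $V_1,\ldots,V_n$ on each of which $f$ oscillates by less than $\varepsilon/2$, disjointify them into compact opens $W_i=V_i\setminus(V_1\cup\cdots\cup V_{i-1})$, choose a point $x_i\in W_i$, and set $g=\sum_i f(x_i)\,1_{W_i}\in\Lc(X,\C)$. On $\bigcup_i W_i\supseteq K$ one has $|f-g|<\varepsilon/2$, while off this set $g=0$ and $|f|<\varepsilon/2$, so $\|f-g\|_\infty<\varepsilon$.

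The only genuine obstacle is the appeal to the compact-open basis: it is precisely local compactness together with total disconnectedness that both supplies enough projections to generate $\Lc(X,\C)$ and makes the step-function approximation possible. Once that basis is in hand, the core-subalgebra conclusion is an immediate consequence of Lemma~\ref{core projections}, and the density argument is routine.
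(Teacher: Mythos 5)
Your proof is correct and takes essentially the same route as the paper: both identify $\Lc(X,\C)$ with the span of characteristic functions of compact open sets (projections in $C_0(X)$) and invoke Lemma~\ref{core projections} for the core-subalgebra claim. The only difference is one of detail, not substance: where the paper cites the standard fact that these characteristic functions generate $C_0(X)$ as a C*-algebra, you establish density by hand with an explicit step-function approximation, and you also spell out why a locally constant compactly supported function is a finite combination of such indicators.
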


\begin{proof}
    Since $\Lc(X,\C)$ is generated by the characteristic functions of compact-open sets, which are projections in $C_0(X)$, it follows from Lemma~\ref{core projections} that $\Lc(X,\C)$ is a core subalgebra of $C_0(X)$. As $X$ is a Stone space, the characteristic functions of compact-open sets generate $C_0(X)$ as a C*-algebra, which implies $\Lc(X,\C)$ is dense in $C_0(X)$.
\end{proof}

\begin{proposition}\label{nublado}
Let $\osf\subseteq \alf^\N$ be a subshift. Then, \[\cvecspan\{s_\alpha s_\alpha^*:\alpha \in \lang\}\cong C(\osf^{OTW}).\] Moreover,
\[\cvecspan\{s_\alpha p_A s_\alpha^*: A\in\TCB,\ \alpha \in \lang\}=\cvecspan\{p_A: A\in\TCB\}\cong C(\widehat{\TCB}).\]
\end{proposition}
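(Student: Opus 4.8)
The plan is to establish two isomorphisms by identifying dense core subalgebras on both sides and invoking Proposition~\ref{core iso}. The key structural input is that $\TCB$ is a Boolean algebra and the projections $\{p_A : A \in \TCB\}$ form a commutative family satisfying the lattice relations in Definition~\ref{quente}(i), so that the $*$-algebra they generate is isomorphic to the Boolean-algebra $*$-algebra of $\TCB$. By Stone duality, $\TCB \cong$ (clopen algebra of $\HTCB$), and the locally constant compactly supported functions $\Lc(\HTCB,\C)$ are exactly the $*$-algebra generated by characteristic functions of compact-open sets. Since $\HTCB$ is a Stone space (indeed compact, as $\TCB$ is a unital Boolean algebra), Lemma~\ref{core lc} gives that $\Lc(\HTCB,\C)$ is a dense core subalgebra of $C(\HTCB)$, while Lemma~\ref{core projections} gives that $\cvecspan\{p_A : A\in\TCB\}$ is a core subalgebra of $\ucalgshift$.

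For the second (displayed) isomorphism, I would first verify the algebraic equality $\cvecspan\{s_\alpha p_A s_\alpha^* : A\in\TCB,\ \alpha\in\lang\} = \cvecspan\{p_A : A\in\TCB\}$. The inclusion $\supseteq$ is immediate by taking $\alpha=\eword$. For $\subseteq$, the point is that each $s_\alpha p_A s_\alpha^*$ is itself a projection in the commutative algebra generated by the $p_A$: using relation (ii) and Proposition~\ref{lemma.algebra.unital}, one computes $s_\alpha p_A s_\alpha^* = p_B$ for an appropriate $B\in\TCB$ (essentially the image of $A$ under the partial map associated to $\alpha$, intersected with the relevant cylinder/follower sets). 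Granting this, both closed spans coincide with the commutative C*-algebra $\cvecspan\{p_A : A\in\TCB\}$, which is a core subalgebra by Lemma~\ref{core projections}. Matching the $*$-isomorphism $\vecspan\{p_A\}\cong \Lc(\HTCB,\C)$ coming from Stone duality with the two dense core subalgebras, Proposition~\ref{core iso} yields $\cvecspan\{p_A : A\in\TCB\}\cong C(\HTCB)$.

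For the first isomorphism, the target is $C(\osf^{OTW})$ rather than $C(\HTCB)$. Here I would use the results of \cite{BCGW22}: the closed span $\cvecspan\{s_\alpha s_\alpha^* : \alpha\in\lang\} = \cvecspan\{p_{Z_\alpha} : \alpha\in\lang\}$ is the commutative C*-algebra generated by the cylinder-set projections, which is again a core subalgebra by Lemma~\ref{core projections}. On the algebraic side, the $*$-algebra $\vecspan\{p_{Z_\alpha}\}$ is isomorphic to $\Lc(\osf^{OTW},\C)$ via the identification of the generalised cylinders $\CZ(\alpha)$ with the compact-open subsets of $\osf^{OTW}$ established in \cite{BCGW22,OTW}; since $\osf^{OTW}$ is compact, totally disconnected and Hausdorff, Lemma~\ref{core lc} applies and $\Lc(\osf^{OTW},\C)$ is a dense core subalgebra of $C(\osf^{OTW})$. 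Proposition~\ref{core iso} then gives the claimed isomorphism.

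The main obstacle I anticipate is the algebraic bookkeeping needed to pin down the two Boolean-algebra-to-function-algebra identifications precisely: concretely, proving that $s_\alpha p_A s_\alpha^*$ reduces to some $p_B$ with $B\in\TCB$ (so that the span does not grow), and correctly matching the Stone dual $\HTCB$ with the spectrum realized by $\osf^{OTW}$ so that the generalised cylinders correspond to the right compact-open sets. Both of these are where the specific combinatorics of $C(\alpha,\beta)$, the follower sets, and the infinite-extension structure of $\osf^{OTW}$ enter, and it is essential that the relevant Boolean/spectral identifications are already carried out in \cite{BCGW22}, which I would cite rather than re-derive.
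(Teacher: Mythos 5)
Your overall architecture is the same as the paper's: identify the dense $*$-subalgebras $\vecspan\{s_\alpha s_\alpha^*:\alpha\in\lang\}$ and $\vecspan\{p_A:A\in\TCB\}$ with $\Lc(\osf^{OTW},\C)$ and $\Lc(\HTCB,\C)$ respectively, note that each side of these identifications is a dense core subalgebra (Lemmas~\ref{core projections} and~\ref{core lc}), and conclude with Proposition~\ref{core iso}. The paper does exactly this, citing \cite[Proposition~3.17]{BCGW22} for the first identification and \cite[Proposition~3.19]{BCGW22} for the second.

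There is, however, one genuine gap at the crux of your argument. You assert that because the projections $\{p_A\}$ \emph{satisfy} the lattice relations of Definition~\ref{quente}(i), the $*$-algebra they generate \emph{is isomorphic to} the Boolean-algebra $*$-algebra of $\TCB$ (equivalently, via Stone duality, to $\Lc(\HTCB,\C)$). Satisfying relations only yields a surjection from that universal object onto $\vecspan\{p_A\}\subseteq\ucsalgshift$: the universal property of $\ucsalgshift$ produces maps \emph{out of} $\ucsalgshift$, so nothing in what you wrote rules out collapsing, e.g.\ $p_A=p_B$ for $A\neq B$, in which case $\vecspan\{p_A\}$ would be a proper quotient of $\Lc(\HTCB,\C)$ and the hypothesis of Proposition~\ref{core iso} --- that the two dense core subalgebras are $*$-isomorphic --- would simply fail. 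The same issue affects your first isomorphism: the identification $\vecspan\{s_\alpha s_\alpha^*\}\cong\Lc(\osf^{OTW},\C)$ of \cite[Proposition~3.17]{BCGW22} lives inside the purely algebraic $\TCA_{\C}(\osf)$, and transferring it to the copy of that span inside $\ucsalgshift$ requires knowing that the canonical homomorphism $\TCA_{\C}(\osf)\to\ucsalgshift$ is injective. This is precisely Proposition~\ref{zelda}, which rests on the concrete $\ell^2(\osf)$-representation (Proposition~\ref{luigikart} and Corollary~\ref{mariowonder}) together with the graded uniqueness theorem of \cite{BCGW22}; the paper's proof invokes Proposition~\ref{zelda} for exactly this purpose, and your proposal never does. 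Once that citation (or an equivalent faithfulness argument) is inserted at the point where you pass from the abstract Boolean relations to the Stone-duality picture, your proof closes and coincides with the paper's; your sketch that $s_\alpha p_A s_\alpha^*$ is again of the form $p_B$, so that the two closed spans in the second display agree, is correct and is part of what \cite[Proposition~3.19]{BCGW22} provides.
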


\begin{proof}
    By \cite[Proposition~3.17]{BCGW22} and Remark~\ref{algebraic.involution}, $\vecspan\{s_\alpha s_\alpha^*:\alpha \in \lang\}$ and $\Lc(\osf^{OTW},\C)$ are *-isomorphic. Observe that $\Lc(\osf^{OTW},\C)$ is a dense core subalgebra of $C(\osf^{OTW})$ by Lemma~\ref{core lc}, since $\osf^{OTW}$ is a Stone space. Also, by Proposition~\ref{zelda} and Lemma~\ref{core projections}, $\vecspan\{s_\alpha s_\alpha^*:\alpha \in \lang\}$ is a dense core subalgebra of $\cvecspan\{s_\alpha s_\alpha^*:\alpha \in \lang\}$. Thus, the first isomorphism follows from Proposition~\ref{core iso}.

    For the second isomorphism, we use the same idea and apply \cite[Proposition~3.19]{BCGW22} in place of \cite[Proposition~3.17]{BCGW22}.
\end{proof}

In \cite{MR3614028} a normal labelled space $\tlspace$ is associated with a subshift $\osf$ as follows: the graph $\CE$ is given by $\CE^0=\osf$, $\CE^1=\{(x,a,y)\in\osf\times\alf\times\osf: x=ay\}$, $s(x,a,y)=x$ and $r(x,a,y)=y$. The labelling map is given by $\CL(x,a,y)=a$, and the accommodating family $\TCB$ is the Boolean algebra defined above. Then, the triple $\tlspace$ is a normal labelled space \cite[Lemma 5.5]{MR3614028}.

For $A\in \TCB$, define $\CL(A\CE^1)=\{a\in\alf: Z_a\cap A\neq\emptyset\}$ and let $\TCB_{reg} = \{A\in \TCB :  0<|\CL(A\CE^1)|<\infty\}$. Then, $A\in\TCB_{reg}$ if and only if $Z_a\cap A\neq\emptyset$ for finitely many $a\in\alf$. Note that, because $\CE$ has no sinks, the definitions of $\CL(A\CE^1)$ and $\TCB_{reg}$ above coincide with the general definitions of these sets for labelled spaces (see \cite{BCGW22}). In particular, if the alphabet is finite, then $\TCB_{reg}=\TCB$. Also, since $\osf=\bigsqcup_{a\in\alf}Z_a$, we have that if $A\in\TCB$ then $A=\bigsqcup_{a\in\CL(A\CE^1)}Z_a\cap A$.

For $\alpha,\beta\in\lang$ such that $\beta\neq\eword$ and $a\in\alf$, we have from \cite[Equation (6)]{MR3614028} that the relative range of the set $C(\alpha,\beta)$ is given by 

\begin{equation}\label{eq:rel.range}
r(C(\alpha,\beta),a)=\begin{cases}
C(a,\eword)\cap C(\alpha,\beta_2\ldots\beta_{|\beta|}) & \text{if } \beta=a\beta_2,\ldots,\beta_{|\beta|}, \\
\emptyset & \text{otherwise}.
\end{cases}    
\end{equation}

Also,
\[r(C(\alpha,\eword),a)=\begin{cases}
C(\alpha a,\eword) & \text{if }\alpha a\in \lang, \\
\emptyset & \text{if }\alpha a\notin \lang.
\end{cases}\]
More generally, \[r(A,\alpha)=\{x\in\osf: \alpha x\in A\}.\] This last equality implies that $r(\alpha)=r(\osf,\alpha)=F_{\alpha}=C(\alpha,\eword)$.

Following the same line of thought as in \cite[Theorem 3.12]{BCGW22}, we obtain that the subshift C*-algebra may be realized as the C*-algebra of the labelled space $\tlspace$. We state this as a theorem below.

\begin{theorem}\label{melaoflutuante}
Let $\osf$ be a subshift and let $\tlspace$ be the labelled space defined above. Then, $\ucsalgshift\cong C^*\tlspace$.
\end{theorem}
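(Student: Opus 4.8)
The plan is to exhibit a mutually inverse pair of $*$-homomorphisms between $\ucsalgshift$ and the labelled space C*-algebra $C^*\tlspace$ by matching generators and checking relations, exactly paralleling the algebraic statement \cite[Theorem~3.12]{BCGW22} but now at the C*-level. First I would recall the universal presentation of $C^*\tlspace$: it is generated by projections $\{p_A : A \in \TCB\}$ (indexed by the accommodating Boolean algebra, which is literally the same $\TCB$ here) together with partial isometries $\{s_a : a \in \alf\}$ indexed by the labels, subject to the standard labelled-space relations (the $p_A$ form a representation of the Boolean algebra, the $s_a$ are partial isometries with $s_a^*s_a = p_{r(a)}$, ranges/sources interact correctly via the relative range map $r(\cdot,a)$, and the Cuntz--Krieger relation holds at regular sets in $\TCB_{reg}$). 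The excerpt has already set up precisely the dictionary needed: $\CE^0 = \osf$, the labelling gives one edge per letter, $r(\alpha) = C(\alpha,\eword) = F_\alpha$, and Equation~\eqref{eq:rel.range} computes the relative ranges of the generating sets $C(\alpha,\beta)$.

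The core of the proof is then a two-way universal-property argument. In one direction I would define a candidate assignment sending the subshift generators $s_a, p_A$ to the labelled-space generators of the same name and verify that the images satisfy the defining relations (i) and (ii) of Definition~\ref{quente}; the key identity $s_\beta s_\alpha^* s_\alpha s_\beta^* = p_{C(\alpha,\beta)}$ reduces, after writing $s_\alpha^* s_\alpha = p_{r(\alpha)}$ and pushing projections through partial isometries, to the relative-range computations in \eqref{eq:rel.range} together with the fact that $r(A,\alpha) = \{x : \alpha x \in A\}$ recorded just before the theorem. By universality of $\ucsalgshift$ this yields a $*$-homomorphism $\Phi : \ucsalgshift \to C^*\tlspace$. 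In the reverse direction I would check that the images in $\ucsalgshift$ of the labelled-space generators satisfy the labelled-space relations — here the content of Proposition~\ref{lemma.algebra.unital} (that $s_a^*s_b = \delta_{a,b}p_{F_a}$, the commutation relations, and $s_\alpha s_\beta = 0$ when $\alpha\beta \notin \lang$) gives exactly the partial-isometry and source/range relations, while the Cuntz--Krieger relation at $A \in \TCB_{reg}$ follows from the decomposition $A = \bigsqcup_{a \in \CL(A\CE^1)} Z_a \cap A$ noted in the excerpt. Universality of $C^*\tlspace$ then gives $\Psi : C^*\tlspace \to \ucsalgshift$. Since both maps fix the respective generating sets, $\Phi$ and $\Psi$ are mutually inverse on generators and hence everywhere, giving the isomorphism.

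I expect the main obstacle to be the verification of the Cuntz--Krieger relation in both directions, since this is exactly where the two presentations differ most in form: Definition~\ref{quente} packages all structure into the single family of relations (i)--(ii) on the $C(\alpha,\beta)$, whereas the labelled-space presentation of \cite{MR3614028} imposes a separate Cuntz--Krieger condition summing over letters emitted from a regular set. Matching these requires carefully tracking the relative range map and the regularity condition $0 < |\CL(A\CE^1)| < \infty$, and confirming that the labelled space $\tlspace$ is normal (cited from \cite[Lemma~5.5]{MR3614028}) so that its C*-algebra has the expected universal property. Rather than recomputing everything, I would lean on the fact that the purely algebraic version of this correspondence is already established in \cite[Theorem~3.12]{BCGW22}; the remaining work is to observe that the same generator-matching assignment respects the C*-relations, which are the norm-closed/involutive analogues of the algebraic ones, and that no additional analytic difficulty arises because both algebras are universal C*-algebras on the identical generators-and-relations data. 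This reduces the theorem to the bookkeeping above rather than any genuinely new estimate.
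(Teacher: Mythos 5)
Your proposal is correct and follows essentially the same route as the paper: the paper's entire proof is the one-line remark that one obtains the result by ``following the same line of thought as in \cite[Theorem~3.12]{BCGW22}'', i.e., exactly the two-way generator-matching/universal-property argument you spell out, with the relative-range computations \eqref{eq:rel.range} and the regular-set decomposition $A=\bigsqcup_{a\in\CL(A\CE^1)}Z_a\cap A$ doing the work on the Cuntz--Krieger side. Your write-up is simply a more detailed version of what the paper leaves implicit.
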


Using Theorem~\ref{melaoflutuante} and \cite[Corollary~3.10]{MR3614028} we obtain the following.

\begin{corollary}[Gauge-invariance uniqueness theorem]\label{gut_unital}
    Let $\osf$ be a subshift, $B$ a C*-algebra and $\Phi:\ucsalgshift\to B$ a *-homomorphism. If $\Phi(p_A)\neq 0$ for all $A\in\TCB\setminus\{\emptyset\}$, and there exists a strongly continuous action $\beta:\mathbb{T}\to B$ such that $\Phi\circ\gamma_z=\beta_z\circ\Phi$ for all $z\in\mathbb{T}$, then $\Phi$ is injective.
\end{corollary}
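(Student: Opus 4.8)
The plan is to deduce this gauge-invariance uniqueness theorem for $\ucsalgshift$ directly from the corresponding result for C*-algebras of labelled spaces, namely \cite[Corollary~3.10]{MR3614028}, by transporting the statement across the isomorphism $\ucsalgshift\cong C^*\tlspace$ established in Theorem~\ref{melaoflutuante}. The essential point is that the gauge action $\gamma$ on $\ucsalgshift$ corresponds, under this isomorphism, to the canonical gauge action on the labelled space algebra $C^*\tlspace$, and that the projections $p_A$ correspond to the generating projections $p_A$ of $C^*\tlspace$ that appear in the hypotheses of the labelled-space gauge-invariance theorem. So the first step is to fix the isomorphism $\varphi:\ucsalgshift\to C^*\tlspace$ from Theorem~\ref{melaoflutuante} and record how it acts on generators: $s_a\mapsto s_a$ and $p_A\mapsto p_A$ (in the labelled space generators).

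Next I would verify gauge-equivariance. Recall that the gauge action $\gamma$ on $\ucsalgshift$ is determined on generators by $\gamma_z(p_A)=p_A$ and $\gamma_z(s_a)=zs_a$, and the labelled-space gauge action is defined identically on the generators of $C^*\tlspace$. Since both actions are defined by the universal property on generators that are matched by $\varphi$, the identity $\varphi\circ\gamma_z=\gamma_z^{\,\text{ls}}\circ\varphi$ holds on generators and hence everywhere. Then, given $\Phi:\ucsalgshift\to B$ satisfying the hypotheses, I would form $\Psi:=\Phi\circ\varphi^{-1}:C^*\tlspace\to B$. The hypothesis $\Phi\circ\gamma_z=\beta_z\circ\Phi$ transports to $\Psi\circ\gamma_z^{\,\text{ls}}=\beta_z\circ\Psi$, and the hypothesis $\Phi(p_A)\neq0$ for all nonempty $A\in\TCB$ transports to $\Psi(p_A)\neq0$, which is precisely the nonvanishing-on-projections condition required by \cite[Corollary~3.10]{MR3614028}. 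Applying that corollary gives that $\Psi$ is injective, and since $\varphi$ is an isomorphism, $\Phi=\Psi\circ\varphi$ is injective as well.

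The main point requiring care is matching the precise hypotheses of \cite[Corollary~3.10]{MR3614028} to the condition stated here. The labelled-space gauge-invariance theorem is typically phrased in terms of the generating projections $\{p_A : A\in\TCB\}$ (and possibly conditions indexed by $\TCB_{reg}$ or by vertices), so I must confirm that ``$\Phi(p_A)\neq0$ for all $A\in\TCB\setminus\{\emptyset\}$'' coincides with the injectivity-on-the-diagonal hypothesis used there, rather than a weaker or stronger condition. Given the explicit correspondence of generators recorded in Theorem~\ref{melaoflutuante} and the identification of $\TCB$ with the accommodating family of the labelled space $\tlspace$, this matching should be immediate, but it is the one step where the translation between the two formalisms must be stated explicitly rather than assumed. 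Once that identification is in place, the proof is a formal transport argument and no further computation is needed.
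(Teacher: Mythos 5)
Your proposal is correct and follows exactly the paper's route: the paper deduces this corollary precisely by combining Theorem~\ref{melaoflutuante} (the isomorphism $\ucsalgshift\cong C^*\tlspace$) with the gauge-invariance uniqueness theorem for labelled space C*-algebras \cite[Corollary~3.10]{MR3614028}. Your write-up simply makes explicit the generator-matching, gauge-equivariance, and hypothesis-transport steps that the paper leaves implicit.
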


\subsection{Non-unital C*-algebras of subshifts}
In this section, we define a possibly non-unital C*-algebra $\calgshift$ associated with a subshift $\osf$.

Let $\CB$ be the Boolean algebra of subsets of $\osf$ generated by all $C(\alpha,\beta)$, for $\alpha,\beta\in\lang$ not both simultaneously equal to $\eword$. In particular, we do not require that $\osf$ is a generator for the Boolean algebra $\CB$. In some cases $\CB$ and $\TCB$ coincide \cite[Example 4.16]{BCGW22}, and in others they do not \cite[Example 4.11]{BCGW22}. 

\begin{definition}\label{nonunitalalgebra}
The \emph{subshift C*-algebra} $\calgshift$ is the universal C*-algebra generated by projections $\{p_A: A\in\CB\}$ and partial isometries $\{s_a: a\in\alf\}$ subject to the relations:
\begin{enumerate}[(i)]
    \item $p_{A\cap B}=p_Ap_B$, $p_{A\cup B}=p_A+p_B-p_{A\cap B}$ and $p_{\emptyset}=0$, for every $A,B\in\CB$;
    \item $s_{\beta}s^*_{\alpha}s_{\alpha}s^*_{\beta}=p_{C(\alpha,\beta)}$ for all $\alpha,\beta\in\lang\setminus\{\eword\}$, where for $\alpha=\alpha_1\ldots\alpha_n\in\lang \setminus \{\omega\}$, $s_\alpha:=s_{\alpha_1}\cdots s_{\alpha_n}$
    \item $s_\alpha^* s_\alpha = p_{C(\alpha,\omega)}$ for all $\alpha \in \lang \setminus \{\omega\}$;
    \item $s_\beta s_\beta^* = p_{C(\omega,\beta)}$ for all $\beta \in\lang \setminus \{\omega\}$.
\end{enumerate}
\end{definition}

\begin{remark}
Similar to \cite[Proposition~4.8]{BCGW22}, the C*-algebra $\calgshift$ coincides with $\ucalgshift$ when it is unital, and its unitization coincides with $\ucalgshift$ when it is not unital. 
\end{remark}

\begin{remark}
Propositions~\ref{lemma.algebra.unital}, \ref{luigikart}, \ref{zelda}, Theorem~\ref{melaoflutuante}, Corollaries~\ref{mariowonder} and \ref{gut_unital} hold for non-unital C*-algebras of subshifts. The proofs are similar and we omit them.
\end{remark}

\section{Examples}\label{examples}
In this section, we realize several known algebras as subshift algebras.

\subsection*{Graphs C*-algebras:}
Let $\CE=(\CE^0,\CE^1,r,s)$ be a directed graph, and let $C^*(\CE)$ denote the graph C*-algebra of $\CE$. The one-side edge subshift associated with $\CE$ is the set of all infinite paths, which is the subshift over the alphabet $\alf=\CE^1$ given by the family of forbidden words $\{ef\in\alf^2:r(e)\neq s(f)\}$.

\begin{proposition}\label{LPA}
Let $\CE$ be a graph with no sinks and with no vertex that is simultaneously a source and an infinite emitter. Let $\osf$ be the associated one-sided edge subshift of $\CE$. Then, $\calgshift\cong C^*(\CE)$.
\end{proposition}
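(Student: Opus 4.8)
The plan is to build a $*$-homomorphism $\Phi\colon C^*(\CE)\to\calgshift$ from the universal property of the graph C*-algebra, and then to prove it is an isomorphism by checking surjectivity directly and injectivity via the gauge-invariance uniqueness theorem for graph C*-algebras. The first step is to unwind the generating sets $C(\alpha,\beta)$ of $\CB$ for the edge subshift. For $\alpha,\beta\in\lang\setminus\{\eword\}$, a path $\beta x\in\osf$ lies in $C(\alpha,\beta)$ precisely when $s(x_0)=r(\alpha)$; hence $C(\alpha,\beta)=Z_\beta$ if $r(\alpha)=r(\beta)$ and $C(\alpha,\beta)=\emptyset$ otherwise. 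Likewise $F_\alpha=C(\alpha,\eword)=\{x\in\osf:s(x_0)=r(\alpha)\}$ depends only on the vertex $r(\alpha)$, and $Z_\beta=C(\eword,\beta)$. Thus $\CB$ is generated, as a Boolean algebra, by the cylinders $Z_\beta$ together with the follower sets $F_v:=\{x\in\osf:s(x_0)=v\}$ ranging over vertices $v$ that receive at least one edge.

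Next I would verify, for every vertex $v$, that $F_v\in\CB$ and $F_v\neq\emptyset$. Since $\CE$ has no sinks, every vertex emits an edge and extends to an infinite path, so $F_v\neq\emptyset$, whence $p_{F_v}\neq 0$ by the non-unital analogue of Corollary~\ref{mariowonder}. If $v$ is not a source, then $v=r(e)$ for some edge $e$ and $F_v=F_e\in\CB$ is a follower set; if $v$ is a source then, by hypothesis, $v$ is not an infinite emitter, so it emits finitely many edges and $F_v=\bigsqcup_{s(e)=v}Z_e$ is a finite union of cylinders, again in $\CB$. I would then define a Cuntz--Krieger $\CE$-family in $\calgshift$ by $T_e:=s_e$ and $Q_v:=p_{F_v}$, and verify the relations using Definition~\ref{nonunitalalgebra}: the $Q_v$ are mutually orthogonal since $F_v\cap F_w=\emptyset$ for $v\neq w$; $T_e^*T_e=s_e^*s_e=p_{F_{r(e)}}=Q_{r(e)}$; the range projections $T_eT_e^*=p_{Z_e}$ are mutually orthogonal and dominated by $Q_{s(e)}$ because $Z_e\subseteq F_{s(e)}$; and for a regular vertex $v$ the finite disjoint union $\bigsqcup_{s(e)=v}Z_e=F_v$ yields $\sum_{s(e)=v}T_eT_e^*=Q_v$. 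The universal property of $C^*(\CE)$ then produces $\Phi$ with $\Phi(s_e)=s_e$ and $\Phi(p_v)=p_{F_v}$.

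Surjectivity is then immediate: the image of $\Phi$ is a C*-subalgebra containing all $s_e$ and all $p_{F_v}$, hence all $p_{Z_\beta}=s_\beta s_\beta^*$, and therefore—using $p_{A\cap B}=p_Ap_B$, $p_{A\cup B}=p_A+p_B-p_{A\cap B}$ and relative complements—every $p_A$ with $A\in\CB$, which together with the $s_a$ generate $\calgshift$. For injectivity I would apply the gauge-invariance uniqueness theorem for graph C*-algebras: writing $\rho$ for the gauge action on $C^*(\CE)$ and $\gamma$ for the action on $\calgshift$, one checks on generators that $\Phi\circ\rho_z=\gamma_z\circ\Phi$ for all $z\in\mathbb{T}$, while $\Phi(p_v)=p_{F_v}\neq 0$ for every vertex $v$; the theorem then forces $\Phi$ to be injective, so $\Phi$ is an isomorphism.

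The main obstacle—and the only point at which the hypotheses are used—is guaranteeing that $Q_v=p_{F_v}$ is a legitimate element of $\calgshift$ for every vertex and that the Cuntz--Krieger relation at regular vertices is faithfully reproduced. A source that is also an infinite emitter would force $F_v$ to be an infinite union of cylinders, which need not lie in $\CB$, leaving no candidate for $Q_v$; ruling out such vertices, together with the no-sinks hypothesis that guarantees $F_v\neq\emptyset$ and hence nonzero vertex projections for the uniqueness theorem, is exactly what makes the construction succeed.
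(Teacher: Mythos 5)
Your proof is correct, but it takes a genuinely different route from the paper. The paper's argument is a short transfer of an already-known algebraic fact: by Remark~\ref{r:core_subalg_univ} the Leavitt path algebra $L_{\mathbb{C}}(\CE)$ and the algebraic subshift algebra $\CA_{\C}(\osf)$ are dense core subalgebras of $C^*(\CE)$ and $\calgshift$ respectively, the algebraic isomorphism $L_{\mathbb{C}}(\CE)\cong\CA_{\C}(\osf)$ of \cite[Proposition~4.10]{BCGW22} is checked to be *-preserving, and Proposition~\ref{core iso} then upgrades it to an isometric *-isomorphism of the C*-algebras; all the combinatorial work (where the no-sinks and no source-infinite-emitter hypotheses enter) is outsourced to the cited algebraic result. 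You instead work entirely at the C*-level: you analyze the generators of $\CB$ for the edge subshift (showing each $C(\alpha,\beta)$ is $\emptyset$, a cylinder $Z_\beta$, or a vertex follower set $F_v$), exhibit a Cuntz--Krieger $\CE$-family $T_e=s_e$, $Q_v=p_{F_v}$ in $\calgshift$, invoke the universal property of $C^*(\CE)$, and finish with surjectivity on generators plus the gauge-invariance uniqueness theorem for graph C*-algebras. Your construction is sound at every step: the hypotheses are used exactly where you say they are ($F_v\neq\emptyset$ needs no sinks; $F_v\in\CB$ for a source $v$ needs finite emission), the intertwining of the gauge actions holds on generators, and $\Phi(p_v)=p_{F_v}\neq 0$ follows from the non-unital analogue of Corollary~\ref{mariowonder}, which the paper's remark after Definition~\ref{nonunitalalgebra} licenses (the only routine point left tacit is strong continuity of the gauge action on $\calgshift$, which is standard for universal C*-algebras graded this way). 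What the two approaches buy: the paper's proof is two lines and showcases the core-subalgebra machinery that organizes the whole section, while yours is self-contained, produces the isomorphism explicitly on generators, and requires neither \cite{EGG} nor the algebraic isomorphism of \cite{BCGW22}; notably, your strategy is essentially the one the paper itself uses in Example~\ref{theorPropfail} when comparing $C^*(\CE)$ with the unital algebra $\ucsalgshift$.
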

\begin{proof} 
By Remark~\ref{r:core_subalg_univ}, the Leavitt path algebra $L_{\mathbb{C}}(\CE)$ is a core subalgebra of $C^*(\CE)$, and similarly the algebraic subshift algebra $\CA_{\C}(\osf)$ (see \cite[Definition 4.1]{BCGW22}) is a core subalgebra of $\calgshift$. By \cite[Proposition 4.10]{BCGW22}, we have that $L_{\mathbb{C}}(\CE) \cong \CA_{\C}(\osf)$. Notice that $(s_e)^*= s_e^*$ and  $(p_A)^*=p_A$ defines an involution on  $\CA_{\C}(\osf)$, and then the isomorphism $L_{\mathbb{C}}(\CE) \cong \CA_{\C}(\osf)$ of \cite[Proposition 4.10]{BCGW22} is *-preserving. Hence, Proposition~\ref{core iso} implies that $\calgshift\cong C^*(\CE)$.
\end{proof}

\begin{example}\label{theorPropfail}
Let $\CE$ be the graph such that $\CE^0=\{v,w\}$, $\CE^1=\{e_n\}_{n\in\nn}\cup\{f\}$, $s(e_n)=v$ and $r(e_n)=w=s(f)=r(f)$ for all $n\in\nn$. It was shown in \cite[Example 4.11]{BCGW22} that $L_R(\CE)$ is not isomorphic to $\algshift$ because the first is unital, whereas the latter is not. The same argument can be applied in the C*-algebraic setting. It is natural to compare the unital subshift algebra $\ucsalgshift$ with $C^*(\CE)$. In this case, considering the elements $q_v:=s_f^*s_f$, $q_w:=1-q_v$, $t_{e_n}:=s_{e_n}$ for $n\in\nn$ and $t_f:=s_f$ inside $\ucsalgshift$, we obtain a surjective *-homomorphism $\Phi:C^*(\CE)\to\ucsalgshift$. Using the gauge-invariant uniqueness theorem for graph C*-algebras, it follows that $\Phi$ is injective, and hence, an isomorphism.
\end{example}

The isomorphism in Example~\ref{theorPropfail} is not guaranteed in general. In Example~\ref{chuva}, we construct a graph $\CE$ with finitely many vertices and use K-theory to show that $C^*(\CE)$ is not isomorphic to $\ucsalgshift$.

\subsection*{Ultragraph C*-algebras:}
We refer the reader to \cite[Definition 4.12]{BCGW22} for the definition of an ultragraph. Fix an ultragraph $\mathcal{G}=(G^0, \mathcal{G}^1, r,s)$, and let $C^*(\mathcal G)$ denote the ultragraph C*-algebra associated with $\mathcal{G}$ (see \cite{MR2050134}). 

The associated one-sided edge subshift $\osf_\CG$ of $\CG$ is the set of all infinite paths. This is the subshift over the alphabet $\alf=\CG^1$ given by the family of forbidden words $\{ef\in\alf^2:s(f)\notin r(e)\}$. We show that a certain class of ultragraph C*algebras can be realized as subshift C*-algebras. 

\begin{proposition}\label{LPAU}
Let $\mathcal G$ be an ultragraph such that every vertex is regular, that is, $0<|s^{-1}(v)|<\infty$ for all $v\in G^0$, and let $\osf_\CG$ be its one-sided edge subshift. Then $C^*(\mathcal G)\cong \mathcal{O}_{\osf_\CG}$.
\end{proposition}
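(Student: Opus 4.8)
The plan is to mirror the strategy already used in Proposition~\ref{LPA} for ordinary graphs, replacing the graph/Leavitt path algebra pair by the ultragraph/algebraic-ultragraph algebra pair, and exploiting the theory of dense core subalgebras together with Proposition~\ref{core iso}. Concretely, I would proceed as follows. First, I invoke Remark~\ref{r:core_subalg_univ}: since $C^*(\mathcal{G})$ is a universal C*-algebra presented by generators and relations, the free *-algebra on those same generators and relations, namely the algebraic (complex) ultragraph algebra $L_{\C}(\mathcal{G})$, embeds into $C^*(\mathcal{G})$ as a dense core subalgebra. Likewise, by the same remark, the algebraic subshift algebra $\CA_{\C}(\osf_\CG)$ is a dense core subalgebra of the C*-algebra $\calgshift[\osf_\CG]$, which is the object named $\mathcal{O}_{\osf_\CG}$ in the statement.

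Next, the algebraic heart of the argument is to produce a *-algebra isomorphism $L_{\C}(\mathcal{G}) \cong \CA_{\C}(\osf_\CG)$. For this I would appeal to the corresponding purely algebraic result in \cite{BCGW22}: just as \cite[Proposition~4.10]{BCGW22} establishes $L_{\C}(\CE)\cong\CA_{\C}(\osf)$ for graphs, there should be (or one should prove by the identical generator-matching argument) an analogous statement identifying the algebraic ultragraph algebra with the algebraic edge-subshift algebra of $\mathcal{G}$ when every vertex is regular. The explicit correspondence sends an edge generator $s_e$ of $L_\C(\mathcal{G})$ to the partial isometry $s_e$ of $\CA_\C(\osf_\CG)$ and sends a vertex/range projection $p_v$ (or $p_A$ for $A$ a generalized vertex in $G^0$) to the projection $p_{Z_A}$ (the cylinder-type projection carved out by the language of $\osf_\CG$); the regularity hypothesis $0<|s^{-1}(v)|<\infty$ is exactly what makes the Cuntz–Krieger relation at each vertex finite and hence translatable into the follower-set/cylinder relations of the subshift algebra. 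As in Proposition~\ref{LPA}, I would then observe that the natural involution $(s_e)^*=s_e^*$, $(p_A)^*=p_A$ makes this isomorphism *-preserving.

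Having a *-isomorphism of the two dense core subalgebras, the final step is immediate: Proposition~\ref{core iso} upgrades it to an isometric *-isomorphism $C^*(\mathcal{G})\cong\calgshift[\osf_\CG]=\mathcal{O}_{\osf_\CG}$, completing the proof.

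I expect the main obstacle to lie in the second step, namely verifying the algebraic isomorphism $L_\C(\mathcal{G})\cong\CA_\C(\osf_\CG)$ and pinning down precisely which projections correspond. The subtlety is bookkeeping on $G^0$: an ultragraph has range map $r$ taking values in the generalized vertex algebra (subsets of $G^0$), so the vertex projections of $C^*(\mathcal{G})$ are indexed by a richer Boolean algebra than in the graph case, and one must check that each such projection is hit by an element of the Boolean algebra $\CB$ generating $\calgshift[\osf_\CG]$, and conversely that the Cuntz–Krieger-type relation $p_v=\sum_{s(e)=v}s_es_e^*$ (valid by regularity) matches the subshift relation $s_\alpha^*s_\alpha=p_{F_\alpha}$ under the correspondence. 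The regularity assumption is what guarantees these sums are finite and the two algebras genuinely coincide rather than one being a proper quotient or extension of the other; I would want to confirm that \cite{BCGW22} either states this identification directly or that it follows from their graph argument with only the routine substitution of ultragraph data. Everything else — the core-subalgebra inputs and the application of Proposition~\ref{core iso} — is formal.
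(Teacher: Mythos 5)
Your proposal is correct and follows essentially the same route as the paper: Remark~\ref{r:core_subalg_univ} gives that $L_{\C}(\mathcal G)$ and $\CA_{\C}(\osf_\CG)$ are core subalgebras of $C^*(\mathcal G)$ and $\mathcal{O}_{\osf_\CG}$ respectively, the *-isomorphism between these algebraic algebras is quoted directly from \cite{BCGW22} (the paper cites it rather than reproving it), and Proposition~\ref{core iso} then yields the C*-isomorphism. The only difference is that you sketch how the algebraic ultragraph-to-subshift isomorphism would be verified, whereas the paper treats it as a black box from \cite{BCGW22}.
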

\begin{proof}
By Remark~\ref{r:core_subalg_univ}, the ultragraph Leavitt path algebra $L_{\mathbb{C}}(\mathcal G)$ is a core subalgebra of $C^*(\mathcal G)$. Remark~\ref{r:core_subalg_univ} also implies that $\CA_{\C}( \osf_\CG)$ is core subalgebra of  $\mathcal{O}_{\osf_\CG}$. Hence, \cite[Proposition 4.8]{BCGW22} and  Proposition~\ref{core iso} imply that $C^*(\mathcal G)\cong \mathcal{O}_{\osf_\CG}$.
\end{proof}

\subsection*{Unital Exel-Laca algebras:}
Let $I$ be a set of indices and $A=(A_{ij})_{i,j\in I}$ a $\{0,1\}$-matrix with no rows identically zero. We let $\gosf{A}=\{(x_n)\in I^\nn: A_{x_nx_{n+1}}=1\text{ for all }n\in\nn\}$. The unital Exel-Laca algebra $\TCO_A$ is the unital C*-algebra generated by partial isometries $\{S_i\}_{i\in I}$ such that
\begin{itemize}
    \item[(EL1)] $S_i^*S_iS_j^*S_j=S_j^*S_jS_i^*S_i$, for all $i,j\in I$;
    \item[(EL2)] $S_iS_i^*S_jS_j^*=0$, whenever $i\neq j$;
    \item[(EL3)] $S_i^*S_iS_jS_j^*=A_{ij}S_jS_j^*$ for all $i,j\in I$;
    \item[(EL4)] for all $X,Y\scj I$ finite such that
    \[A(X,Y,j):=\prod_{x\in X}A_{xj}\prod_{y\in Y}(1-A_{yj})\]
    is zero for all but a finite number of $j$'s, we have that,
    \[\prod_{x\in X}S_x^*S_x\prod_{y\in Y}(1-S_y^*S_y)=\sum_{j\in I} A(X,Y,j)S_jS_j^*.\]
\end{itemize}

\begin{proposition}
    In the context above, $\TCO_A\cong\TCO_{\osf_A}$.
\end{proposition}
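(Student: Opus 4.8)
The plan is to follow the same core-subalgebra strategy used throughout the paper, namely Proposition~\ref{LPA} and Proposition~\ref{LPAU}: exhibit a dense core subalgebra on each side, show these algebraic cores are $*$-isomorphic, and then invoke Proposition~\ref{core iso} to lift the isomorphism to the C*-algebras. Concretely, on the subshift side we have by Remark~\ref{r:core_subalg_univ} that the algebraic unital subshift algebra $\TCA_{\C}(\osf_A)$ is a dense core subalgebra of $\ucsalgshift[\osf_A]$, where the involution is the one recalled in Remark~\ref{algebraic.involution}. On the Exel-Laca side, the natural candidate for a dense core subalgebra is the algebraic (purely algebraic, universal) $*$-algebra generated by partial isometries $\{S_i\}_{i\in I}$ subject to (EL1)--(EL4); since $\TCO_A$ is a universal C*-algebra given by generators and relations, Remark~\ref{r:core_subalg_univ} again guarantees that the image of this free $*$-algebra is a dense core subalgebra of $\TCO_A$.

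The main work is therefore the purely algebraic step: constructing a $*$-isomorphism between the algebraic unital subshift algebra $\TCA_{\C}(\osf_A)$ and the algebraic unital Exel-Laca algebra. The dictionary should send each Exel-Laca generator $S_i$ to the subshift partial isometry $s_i$ (identifying the alphabet $\alf$ with the index set $I$), and I would first verify that the $s_i$ in $\ucsalgshift[\osf_A]$ satisfy the Exel-Laca relations (EL1)--(EL4). Here Proposition~\ref{lemma.algebra.unital} does much of the work: (EL1) is part~(ii) (commutation of the $s_\alpha^*s_\alpha$), (EL2) follows from part~(i) together with the fact that $s_is_i^* = p_{Z_i}$ and $s_js_j^* = p_{Z_j}$ with $Z_i\cap Z_j=\emptyset$ for $i\neq j$, and (EL3) amounts to computing $s_i^*s_i s_js_j^* = p_{F_i}p_{Z_j} = p_{F_i\cap Z_j}$ and checking that $F_i\cap Z_j = Z_j$ precisely when $A_{ij}=1$ and is empty otherwise (because $ij\in\lang$ iff $A_{ij}=1$). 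The finite-summation relation (EL4) is the one requiring care: it must be matched against relation~(i) of Definition~\ref{quente}, which describes how the projections $p_A$ decompose, using $\osf=\bigsqcup_{a\in\alf}Z_a$ and the computation that $\prod_{x\in X}p_{F_x}\prod_{y\in Y}(1-p_{F_y}) = p_B$ for the appropriate element $B\in\TCB$, whose decomposition into cylinders $Z_j$ is governed exactly by the numbers $A(X,Y,j)$.

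Conversely, I would check that the Exel-Laca generators, via the same dictionary, satisfy the defining relations of $\TCA_{\C}(\osf_A)$, so that the assignment extends to a well-defined $*$-homomorphism in both directions; these are mutually inverse on generators and hence give a $*$-isomorphism of the algebraic cores. A subtle point worth flagging is that the correspondence should respect the Boolean algebra $\TCB$: one must argue that the idempotents arising from words and follower/cylinder sets in the subshift algebra coincide with the idempotents built from the $S_i^*S_i$ and $S_iS_i^*$ in the Exel-Laca algebra, which is where the matrix $A$ and the structure of the one-step subshift $\gosf{A}=\osf_A$ enter; the finiteness condition in (EL4) corresponds exactly to membership in $\TCB_{reg}$, and this matching of regularity conditions is the place where I would be most careful.

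Once the algebraic $*$-isomorphism $\TCA_{\C}(\osf_A)\cong \{\text{algebraic unital Exel-Laca algebra}\}$ is established, the conclusion $\TCO_A\cong\TCO_{\osf_A}$ follows immediately from Proposition~\ref{core iso}, since both algebraic algebras are dense core subalgebras of their respective C*-completions. The hard part will be the bookkeeping in (EL4): translating the finitely-supported family $\{A(X,Y,j)\}_j$ into the correct element of $\TCB$ and verifying that relation~(i) of Definition~\ref{quente} reproduces the Exel-Laca summation relation on the nose. Everything else is a direct application of Proposition~\ref{lemma.algebra.unital} and the core-subalgebra machinery already set up in the paper.
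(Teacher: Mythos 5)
Your first half coincides with the paper's proof: checking (EL1)--(EL4) for the elements $s_i\in\TCO_{\gosf{A}}$, with (EL4) resting on the identity $\bigcap_{x\in X}F_x\cap\bigcap_{y\in Y}F_y^c=\bigsqcup_{j:A(X,Y,j)=1}Z_j$, is exactly what the paper does. But from there the paper finishes in \emph{one direction only}: the universal property of $\TCO_A$ yields a $*$-homomorphism $\varphi:\TCO_A\to\TCO_{\gosf{A}}$, surjectivity follows from Proposition~\ref{lemma.algebra.unital}(v), and injectivity is obtained by invoking the gauge-invariant uniqueness theorem for Exel-Laca algebras \cite[Theorem~2.7]{UniquenessExelLaca}. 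No inverse homomorphism is ever constructed and no core-subalgebra argument is needed.

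The genuine gap in your plan is precisely the step you defer: the reverse algebraic homomorphism. To obtain a map out of $\TCA_{\C}(\osf_A)$ you must assign an idempotent to \emph{every} $A\in\TCB$ (not only to the sets $F_\alpha$, $Z_\beta$, $C(\alpha,\beta)$) and verify that every Boolean relation holding among such subsets of $\osf_A$ is satisfied by the corresponding idempotents in the algebraic Exel-Laca algebra; this is the entire difficulty, and flagging it as ``the place where I would be most careful'' does not discharge it. Note the contrast with Propositions~\ref{LPA} and \ref{LPAU}, where the analogous algebraic isomorphism was imported ready-made from \cite{BCGW22}; no such algebraic result for Exel-Laca algebras is available in the paper's references, so your route obliges you to prove it from scratch. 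There is also a second, quieter problem: Remark~\ref{r:core_subalg_univ} only says that the \emph{image} of the free $*$-algebra on (EL1)--(EL4) inside $\TCO_A$ is a core subalgebra, and Proposition~\ref{core iso} requires a $*$-isomorphism between the two images. To transport your hypothetical isomorphism of universal algebraic objects to an isomorphism of images you need the canonical map from the algebraic Exel-Laca algebra into $\TCO_A$ to be injective (the subshift-side analogue is Proposition~\ref{zelda}, whose proof uses an algebraic graded uniqueness theorem); this is again a uniqueness-type statement you would have to supply, or else you must rerun the extension argument behind Proposition~\ref{core iso} by hand. The paper's appeal to the C*-algebraic gauge-invariant uniqueness theorem is exactly the device that makes both of these issues disappear.
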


\begin{proof}
We show that the elements $\{s_i\}_{i\in I}$ satisfy the relations defining $\TCO_A$. (EL1) and (EL2) follows from Proposition \ref{lemma.algebra.unital}. For (EL3), we observe that for $i,j\in I$ we have that $F_i\cap Z_j=\emptyset$ if $A_{ij}=0$, and $F_i\cap Z_j=Z_j$ if $A_{ij}=1$. Hence
\[s_i^*s_is_js_j^*=p_{F_i}p_{Z_j}=p_{F_i\cap Z_j}=A_{ij}p_{Z_j}=A_{ij}s_js_j^*.\]
For (EL4), let $X,Y\scj I$ be finite sets. Notice that
\[\bigcap_{x\in X} F_x\cap\bigcap_{y\in Y}F_y^c=\bigsqcup_{j:A(X,Y,j)=1}Z_j.\]
In particular, if $A(X,Y,j)$ is zero for all but a finite number of $j$'s, then the above union is finite and hence
\begin{align*}
    \prod_{x\in X}s_x^*s_x\prod_{y\in Y}(1-s_y^*s_y) &= \prod_{x\in X}p_{F_x}\prod_{y\in Y}(1-p_{F_y})\\
    &= \prod_{x\in X}p_{F_x}\prod_{y\in Y}(p_{F_y^c}) \\
    &= p_{\bigcap_{x\in X} F_x\cap\bigcap_{y\in Y}F_y^c} \\
    &= p_{\bigcup_{j:A(X,Y,j)=1}Z_j} \\
    &= \sum_{j:A(X,Y,j)=1}s_js_j^* \\
    &= \sum_{j\in I}A(X,Y,j)s_js_j^*.
\end{align*}

By the universal property of $\TCO_A$, there exists a $*$-homomorphism $\varphi:\TCO_A\to\TCO_{\gosf{A}}$, which is surjective by Proposition \ref{lemma.algebra.unital}. Injectivity follows from the gauge-invariant uniqueness theorem \cite[Theorem~2.7]{UniquenessExelLaca}.
\end{proof}

\begin{remark}
    An analogous result for the algebras that are not necessarily unital can be deduced from Proposition~\ref{LPAU} and \cite[Theorem~4.5]{MR2050134}.
\end{remark}

\section{K-Theory}\label{k-theory}
In \cite[Theorem~4.4]{MR3614028}, Bates, Carlsen and Pask describe the K-theory of a labelled space C*-algebra. Since every unital C*-algebra of a subshift may be realized as a labelled space C*-algebra by Theorem~\ref{melaoflutuante}, we can reformulate \cite[Theorem~4.4]{MR3614028} for C*-algebras of subshifts as follows.

\begin{theorem}\label{k-theory.theorem}
    Let $\osf$ be a one-sided subshift over an arbitrary alphabet $\alf$. Let $(1-\Phi): \operatorname{span}_{\mathbb{Z}}\left\{\chi_A: A \in \TCB_{reg}\right\} \rightarrow \operatorname{span}_{\mathbb{Z}}\left\{\chi_A: A \in \TCB\right\}$ be the linear map given by
$$
(1-\Phi)\left(\chi_A\right)=\chi_A-\sum_{a \in \CL\left(A \CE_{\osf}^1\right)} \chi_{r(A, a)}, \quad A \in \TCB_{reg} .
$$
Then $K_1(\ucsalgshift)$ is isomorphic to $\operatorname{ker}(1-\Phi)$ and there exists an isomorphism from $K_0(\ucsalgshift)$ to $\operatorname{coker}(1-\Phi)$ which maps $\left[S_\alpha^* S_\alpha\right]_0$ to $\chi_{r(\alpha)}+\operatorname{Im}(1-\Phi)$ for each $\alpha \in\lang$.
\end{theorem}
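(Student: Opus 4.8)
The plan is to transport the computation to the labelled space picture of Theorem~\ref{melaoflutuante} and then invoke the general K-theory formula of Bates, Carlsen and Pask verbatim. By Theorem~\ref{melaoflutuante} we have $\ucsalgshift \cong C^*\tlspace$, so it suffices to identify the conclusion of \cite[Theorem~4.4]{MR3614028}, applied to the labelled space $\tlspace$, with the objects appearing in the statement. The first task is to confirm that $\tlspace$ meets the standing hypotheses of that theorem: it is a normal labelled space by \cite[Lemma~5.5]{MR3614028}, it is weakly left-resolving, and the underlying graph $\CE$ has no sinks. These are precisely the properties recorded in the paragraphs preceding the statement, and, as noted there, the no-sinks hypothesis guarantees that the abstract notions of $\CL(A\CE^1)$ and of the regular set $\TCB_{reg}$ coincide with the general labelled-space definitions used in \cite{MR3614028}.

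Next I would show that the abstract boundary map of \cite[Theorem~4.4]{MR3614028}, which sends $\chi_A$ to $\chi_A-\sum_a\chi_{r(A,a)}$ with the sum taken over the labels emitted by $A$, reduces to the displayed formula for $(1-\Phi)$. This is where the concrete description of the subshift labelled space enters: for $A\in\TCB_{reg}$ the index set $\CL(A\CE^1)$ is finite by definition, so the sum is well defined, and the relative ranges $r(A,a)$ are given by the formulas recorded before the statement, in particular \eqref{eq:rel.range}. Matching the source and target $\mathbb{Z}$-spans is immediate, since the generating family of the subshift labelled space is exactly $\TCB$. Granting this dictionary, \cite[Theorem~4.4]{MR3614028} yields $K_1(\ucsalgshift)\cong\ker(1-\Phi)$ together with an isomorphism $K_0(\ucsalgshift)\cong\operatorname{coker}(1-\Phi)$.

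It remains to pin down the image of $[S_\alpha^* S_\alpha]_0$. The $K_0$-isomorphism of \cite[Theorem~4.4]{MR3614028} sends the class $[p_A]_0$ of the projection associated with $A\in\TCB$ to the coset $\chi_A+\Img(1-\Phi)$. Since $s_\alpha^* s_\alpha=p_{C(\alpha,\omega)}=p_{F_\alpha}$ and $r(\alpha)=F_\alpha=C(\alpha,\omega)$, we have $[S_\alpha^* S_\alpha]_0=[p_{r(\alpha)}]_0$, which maps to $\chi_{r(\alpha)}+\Img(1-\Phi)$, as claimed.

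The routine verifications (normality, no sinks, weakly left-resolving) are already available, so the only genuine work is the translation between the two formulations. I expect the main obstacle to be the careful matching of the abstract map in \cite{MR3614028}, phrased in terms of generalized vertices and relative ranges of a labelled space, with the explicit subshift formula. The crucial points are that the no-sinks hypothesis forces $\TCB_{reg}$ and $\CL(A\CE^1)$ to agree with their general labelled-space counterparts, and that finiteness of $\CL(A\CE^1)$ for regular $A$ makes the defining sum finite, so that $(1-\Phi)$ lands where claimed.
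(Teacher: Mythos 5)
Your proposal is correct and follows essentially the same route as the paper: the paper offers no separate argument beyond invoking Theorem~\ref{melaoflutuante} to realize $\ucsalgshift$ as $C^*\tlspace$ and then reformulating \cite[Theorem~4.4]{MR3614028} for this labelled space, exactly as you do. Your additional verifications (the standing hypotheses, the identification of $\CL(A\CE^1)$ and $\TCB_{reg}$ with their labelled-space counterparts, and the computation $[S_\alpha^*S_\alpha]_0=[p_{r(\alpha)}]_0$ via $s_\alpha^*s_\alpha=p_{F_\alpha}=p_{r(\alpha)}$) merely make explicit the dictionary the paper leaves implicit in the paragraphs preceding Theorem~\ref{melaoflutuante}.
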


We present two applications of this result that illustrate the computation of K-theory for subshift C*-algebras. In the first example, we present a graph such its C*-algebra is not isomorphic to the corresponding edge subshift algebra. In the second example, we consider a subshift over an infinite alphabet that is not conjugate to any 1-step OTW-subshift, and thus does not fall under any of the examples of Section~\ref{examples}.

\begin{example}\label{chuva} Let $E$ be the graph with three vertices, say $v_1,v_2,v_3$, infinitely many edges from $v_1$ to $v_3$, infinitely many edges from $v_2$ to $v_3$, and a loop in $v_3$, see the picture below.
\begin{center}
\begin{tikzpicture}[every node/.style={scale=0.8}]
    \node[circle, draw] (v1) at (0,0) {$v_1$};
    \node[circle, draw] (v2) at (3,0) {$v_2$};
    \node[circle, draw] (v3) at (1.5, -2) {$v_3$};
    \draw[->, ] (v1) to node[above right] {$\!\!\infty$} (v3);
    \draw[->, ] (v2) to node[above] {$\!\!\!\!\!\!\infty$} (v3);
    \draw[->, loop below, looseness=15] (v3) to node[above] {} (v3);
\end{tikzpicture}
\end{center}
We use K-theory to show that the graph C*-algebra $C^*(E)$ is not isomorphic to the subshift algebra $\ucalgshift$, where $\osf$ is the edge subshift. 

Using \cite[Corollary~5.1]{MR3614028} (see also \cite[Theorem~3.1]{KtheoryGraph}) to compute the K-theory of $C^*(E)$, we obtain that $K_0(C^*(E)) \cong \mathbb{Z}^3$ and $K_1(C^*(E)) \cong \mathbb{Z}$.

We compute the K-theory of $\ucalgshift$. Denote the edges from $v_1$ to $v_3$ by $\{a_n\}_{n\in\mathbb{N}}$, from $v_2$ to $v_3$ by $\{b_n\}_{n\in\mathbb{N}}$ and the loop on $v_3$ by $c$. Then  $\osf = \{a_nc^\infty\}_{n\in\mathbb{N}} \cup \{b_nc^\infty\}_{n\in\mathbb{N}}\cup\{c^\infty\}$. It is easy to see that the elements in $\TCB_{reg}$ are the finite subsets of $\osf$, and the elements in $\TCB$ are the finite or cofinite subsets of $\osf$. Then, an element in $\vecspan_{\mathbb{Z}}\left\{\chi_A: A \in \TCB_{reg}\right\}$ is of the form
\[x = \sum_{n\in\mathbb{N}}^{finite}\lambda_{n}\chi_{\{a_nc^\infty\}} + \sum_{n\in\mathbb{N}}^{finite}\mu_{n}\chi_{\{b_nc^\infty\}} + \eta\chi_{\{c^\infty\}}.\]
Taking $\Phi$ as in Theorem~\ref{k-theory.theorem}, we have
\begin{align*}
(1-\Phi)(x) & =  (1-\Phi)\left(\sum_{n\in\mathbb{N}}^{finite}\lambda_{n}\chi_{\{a_nc^\infty\}} + \sum_{n\in\mathbb{N}}^{finite}\mu_{n}\chi_{\{b_nc^\infty\}} + \eta\chi_{\{c^\infty\}}\right) \\
& = \sum_{n\in\mathbb{N}}^{finite}\lambda_{n}(\chi_{\{a_nc^\infty\}} - \chi_{\{c^\infty\}}) + \sum_{n\in\mathbb{N}}^{finite}\mu_{n}(\chi_{\{b_nc^\infty\}} - \chi_{\{c^\infty\}}) + \eta(\chi_{\{c^\infty\}} - \chi_{\{c^\infty\}}) \\
& = \sum_{n\in\mathbb{N}}^{finite}\lambda_{n}(\chi_{\{a_nc^\infty\}} - \chi_{\{c^\infty\}}) + \sum_{n\in\mathbb{N}}^{finite}\mu_{n}(\chi_{\{b_nc^\infty\}} - \chi_{\{c^\infty\}}).
\end{align*}
Therefore, $x\in\ker(1-\Phi)$ if, and only if, $\lambda_{n}=0$ and $\mu_{n}=0$. It follows from Theorem~\ref{k-theory.theorem} that $K_1(\ucalgshift) \cong \ker(1-\Phi) = \vecspan_{\mathbb{Z}}\{\chi_{\{c^\infty\}}\} \cong \mathbb{Z}$. The above computations also show that
\[\Img(1-\Phi) = \left\{\sum_{n\in\mathbb{N}}^{finite}\lambda_{n}\chi_{\{a_nc^\infty\}} + \sum_{n\in\mathbb{N}}^{finite}\mu_{n}\chi_{\{b_nc^\infty\}} + \eta\chi_{\{c^\infty\}} \ \biggl| \ \eta = -\sum_{n\in\mathbb{N}}^{finite}\lambda_{n} - \sum_{n\in\mathbb{N}}^{finite}\mu_{n}\right\}.\]
Now, observe that an element in $\vecspan_{\mathbb{Z}}\left\{\chi_A: A \in \TCB\right\}$ is of the form 
\[y = \sum_{n\in\mathbb{N}}^{finite}\lambda_{n}\chi_{\{a_nc^\infty\}} + \sum_{n\in\mathbb{N}}^{finite}\mu_{n}\chi_{\{b_nc^\infty\}} + \eta\chi_{\{c^\infty\}} + \rho\chi_{\osf}.\]
Define the surjective map $\Psi:\vecspan_{\mathbb{Z}}\left\{\chi_A: A \in \TCB\right\}\to\mathbb{Z}^2$ by
\begin{align*}
\Psi(y) & = \Psi\left(\sum_{n\in\mathbb{N}}^{finite}\lambda_{n}\chi_{\{a_nc^\infty\}} + \sum_{n\in\mathbb{N}}^{finite}\mu_{n}\chi_{\{b_nc^\infty\}} + \eta\chi_{\{c^\infty\}} + \rho\chi_{\osf}\right) \\
& = \left(\eta + \sum_{n\in\mathbb{N}}^{finite}\lambda_{n} + \sum_{n\in\mathbb{N}}^{finite}\mu_{n}, \rho\right).
\end{align*}
Clearly, $y\in\ker\Psi$ if, and only if,  $\rho=0$ and $\displaystyle\eta = -\sum_{n\in\mathbb{N}}^{finite}\lambda_{n} - \sum_{n\in\mathbb{N}}^{finite}\mu_{n}$, i.e., $y\in\Img(1-\Phi)$. This shows that $\operatorname{coker}(1-\Phi) \cong \mathbb{Z}^2$. It follows from Theorem~\ref{k-theory.theorem} that $K_0(\ucalgshift) \cong \mathbb{Z}^2$.

Now, since $K_0(C^*(E))\cong \mathbb{Z}^3$ and $K_0(\ucalgshift) \cong \mathbb{Z}^2$, we conclude that $C^*(E)$ and $\ucalgshift$ are not isomorphic.
\end{example}

\begin{example}\label{tripla}
Take $\N$ as the alphabet and define $P$ as the following subset of $\N^3$: $$P:=\{(0,j,0),(j,0,j):j\in \N\}.$$ Let $F=\N^3\setminus P$. Then, by \cite{DDStep}, $\osf_F^{OTW}:=\osf^{OTW}$ is a 2-step OTW-subshift that is not conjugate to any 1-step OTW-subshift. Notice that $\osf:=\{ 0^\infty, (0j)^\infty, (j0)^\infty: j\in \N^*\}$. Consider the subsets of $\osf$: $A=\{0^\infty\}$, $B_j=\{(0j)^\infty\}$ for $j\geq1$, $C_j=\{(j0)^\infty\}$ for $j\geq1$, $U = \{(0j)^\infty, j\geq1\}$ and $V = \{(j0)^\infty, j\geq1\}$. We leave it to the reader to verify that 
\[\operatorname{span}_{\mathbb{Z}}\left\{\chi_A: A \in \TCB\right\} = \vecspan_{\mathbb{Z}}\{\chi_A, \chi_{B_j}, \chi_{C_j}, \chi_U, \chi_V\}\]
and 
\[\operatorname{span}_{\mathbb{Z}}\left\{\chi_A: A \in \TCB_{reg}\right\} = \vecspan_{\mathbb{Z}}\{\chi_A, \chi_{B_j}, \chi_{C_j}, \chi_U\}.\]
Let \[x=\sum_{i\in\mathbb{N}}^{finite}\lambda_i\chi_{B_i} + \sum_{j\in\mathbb{N}}^{finite}\mu_j\chi_{C_j} + \eta\chi_A + \rho\chi_U \in \operatorname{span}_{\mathbb{Z}}\left\{\chi_A: A \in \TCB_{reg}\right\}.\]
Taking $\Phi$ as in Theorem~\ref{k-theory.theorem}, we have that
\begin{align*}
(1 - \Phi)(x) & = (1-\Phi)\left(\sum_{i\in\mathbb{N}}^{finite}\lambda_i\chi_{B_i} + \sum_{j\in\mathbb{N}}^{finite}\mu_j\chi_{C_j} + \eta\chi_A + \rho\chi_U\right) \\
& = \sum_{i\in\mathbb{N}}^{finite}\lambda_i(\chi_{B_i} - \chi_{C_i}) + \sum_{j\in\mathbb{N}}^{finite}\mu_j(\chi_{C_j} - \chi_{B_j}) + \eta(\chi_A -\chi_A) + \rho(\chi_U - \chi_V) \\
& = \sum_{i\in\mathbb{N}}^{finite}(\lambda_i-\mu_i)\chi_{B_i} + \sum_{j\in\mathbb{N}}^{finite}(\mu_j-\lambda_j)\chi_{C_j} + \rho(\chi_U - \chi_V).
\end{align*}
This shows that $x\in\ker(1 - \Phi)$ if, and only if, $\rho=0$ and $\mu_j=\lambda_j$ for all $j\geq 1$. Therefore, by Theorem~\ref{k-theory.theorem},
\[K_1(\ucsalgshift) \cong \ker(1-\Phi) = \vecspan_{\mathbb{Z}}\{\chi_A, \chi_{B_j} + \chi_{C_j}\} \cong \bigoplus_{j\in\mathbb{N}}\mathbb{Z}.\]
The calculations above also show that $\Img(1-\Phi)$ is equal to
\[ \vecspan_{\mathbb{Z}}\left\{\sum_{i\in\mathbb{N}}^{finite}\lambda_i\chi_{B_i} + \sum_{j\in\mathbb{N}}^{finite}\mu_j\chi_{C_j} + \eta\chi_A + \rho\chi_U + \kappa\chi_V \ \Biggl| \ \eta = 0, \lambda_i + \mu_i = 0, \rho + \kappa = 0\right\}.\]
Now, define $\displaystyle\Psi:\vecspan_{\mathbb{Z}}\left\{\chi_A: A \in \TCB\right\}\to\bigoplus_{j\in\mathbb{N}}\mathbb{Z}$ by 
\[\Psi\left(\sum_{i\in\mathbb{N}}^{finite}\lambda_i\chi_{B_i} + \sum_{j\in\mathbb{N}}^{finite}\mu_j\chi_{C_j} + \eta\chi_A + \rho\chi_U + \kappa\chi_V\right) = (\eta, \rho + \kappa, \lambda_1 + \mu_1, \lambda_2 + \mu_2, \cdots).\]
Clearly, $\Psi$ is surjective and $\ker\Psi = \Img(1 - \Phi)$. Therefore, by Theorem~\ref{k-theory.theorem},
    \[K_0(\ucsalgshift) \cong \operatorname{coker}(1-\Phi) \cong \bigoplus_{j\in\mathbb{N}}\mathbb{Z}.\]
\end{example}

\section{The dynamical structure of the C*-algebra of a subshift}\label{dynamical}
Let $\osf$ be a subshift. Two groupoid models for the purely algebraic version of the subshift algebra, $\ualgshift$, are given in \cite{BCGW22} (a transformation groupoid and a Deaconu-Renault groupoid). The same groupoids give groupoid C*-algebra realizations of the subshift C*-algebra $\ucsalgshift$. We focus, first, on the groupoid constructed in \cite[Section 5.2]{BCGW22}, which arises as the transformation groupoid induced by the action of the free group  $\F$, generated by the alphabet $\alf$, on the Stone dual $\widehat{\TCB}$ of the Boolean algebra $\TCB$, defined in Section~\ref{unital}. The topology on $\widehat{\TCB}$ has a basis given by the sets $O_A=\{\xi\in \widehat{\TCB}: A\in \xi\}$, where $A\in\TCB$. This groupoid is defined as
\[ \F\ltimes_{\varphi} \HTCB = \{(\xi,t,\eta)\in \HTCB\times\F\times \HTCB: \eta\in V_{t^{-1}} \text{ and } \xi = \varphih_t (\eta) \},\]
where $V_{\beta\alpha^{-1}}=O_{C(\alpha,\beta)}$, and 
\[\varphih_{\alpha\beta^{-1}}(\xi)=\{A\in\TCB:r(B,\beta)\scj r(A,\alpha)\text{ for some }B\in\xi\}\] 
for every $\xi\in V_{\beta\alpha^{-1}}$, for every $\alpha,\beta\in\lang$ such that $\alpha \beta^{-1}$ is in reduced form, and $V_t= \emptyset$ if $t\neq\beta\alpha^{-1}$ for some $\alpha,\beta\in\lang$.

Multiplication and inverses in the groupoid are defined by
\[(\xi,s,\eta)(\eta,t,\gamma) = (\xi,st,\gamma) \text{ and } (\xi,t,\eta)^{-1}=(\eta,t^{-1},\xi),\]
respectively. 
Note that $\F\ltimes_{\varphi} \HTCB$ is an ample groupoid since the topology on $\HTCB$ has a basis of compact-open sets. For more details on this construction, see \cite[Section 5.2]{BCGW22}. We denote the isotropy group bundle of $\F\ltimes_{\varphih}\HTCB$ by $\operatorname{Iso}(\F\ltimes_{\varphih}\HTCB)$, and its interior by $\operatorname{Iso}(\F\ltimes_{\varphih}\HTCB)^0$. Recall that $\F\ltimes_{\varphih}\HTCB$ is \emph{effective} if $\operatorname{Iso}(\F\ltimes_{\varphih}\HTCB)^0= (\F\ltimes_{\varphih}\HTCB)^{(0)}$.
We say $\F\ltimes_{\varphih}\HTCB$ is \emph{topologically principal} if the set of units with trivial isotropy is dense in $(\F\ltimes_{\varphih}\HTCB)^{(0)}$. Let $C^*(\F\ltimes_{\varphih}\HTCB)$ denote the groupoid C*-algebra of $\F\ltimes_{\varphih}\HTCB$.

\begin{theorem}\label{iso.partial}
    Let $\osf$ be a subshift. Then $\ucsalgshift\cong C^*(\F\ltimes_{\varphih}\HTCB)$.
\end{theorem}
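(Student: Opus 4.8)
The plan is to transfer the isomorphism from the purely algebraic setting to the C*-setting by means of dense core subalgebras, in the same spirit as the proofs of Propositions~\ref{LPA}, \ref{LPAU} and \ref{nublado}. The starting point is the transformation-groupoid model of \cite[Section~5.2]{BCGW22} which, specialised to the ring $R=\C$, gives an isomorphism of *-algebras between the algebraic unital subshift algebra $\TCA_{\C}(\osf)$ and the complex Steinberg algebra $A_{\C}(\F\ltimes_{\varphih}\HTCB)$, sending each generator $s_a$ to the indicator $1_{V_a}$ of the compact-open bisection $V_a=O_{Z_a}$ and each $p_A$ to $1_{O_A}$. First I would check that, with the involution of Remark~\ref{algebraic.involution} on $\TCA_{\C}(\osf)$ and the standard involution $f^{*}(\gamma)=\overline{f(\gamma^{-1})}$ on the Steinberg algebra, this isomorphism is *-preserving. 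This is a computation on generators: the relation $1_{V_a}^{*}=1_{V_a^{-1}}$ matches $s_a^{*}$ under the correspondence, and since both algebras are spanned by the monomials $s_\alpha p_A s_\beta^{*}$ (respectively by indicators of the bisections $V_{\beta\alpha^{-1}}$ intersected with $O_A$), it suffices to verify compatibility of adjoints on these spanning elements.

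Next I would exhibit both small algebras as dense core subalgebras of the two C*-algebras in question. On the subshift side this is already available: Proposition~\ref{zelda} realises $\TCA_{\C}(\osf)$ as a dense *-subalgebra of $\ucsalgshift$, and, since $\ucsalgshift$ is the universal C*-algebra of the presentation in Definition~\ref{quente} while $\TCA_{\C}(\osf)$ is the corresponding free *-algebra modulo the same relations, Remark~\ref{r:core_subalg_univ} shows this copy is a core subalgebra. On the groupoid side I would use that $\F\ltimes_{\varphih}\HTCB$ is a Hausdorff ample groupoid (being the partial transformation groupoid of a partial action of the discrete group $\F$ on the Stone space $\HTCB$), so that its complex Steinberg algebra $A_{\C}(\F\ltimes_{\varphih}\HTCB)$ is dense in $C^*(\F\ltimes_{\varphih}\HTCB)$ by construction. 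Writing $C^*(\F\ltimes_{\varphih}\HTCB)$ as the universal C*-algebra generated by the indicators of compact-open bisections subject to the usual bisection relations ($1_U 1_V=1_{UV}$, $1_U^{*}=1_{U^{-1}}$, and additivity over disjoint unions that again form a bisection), the Steinberg algebra is once more the free *-algebra on the same generators and relations, so Remark~\ref{r:core_subalg_univ} yields the core property here as well.

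With a *-isomorphism $\TCA_{\C}(\osf)\cong A_{\C}(\F\ltimes_{\varphih}\HTCB)$ between two dense core subalgebras, Proposition~\ref{core iso} then produces an isometric *-isomorphism $\ucsalgshift\cong C^*(\F\ltimes_{\varphih}\HTCB)$, which is the assertion of the theorem.

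I expect the main obstacle to lie in the groupoid-side core-subalgebra step, namely in guaranteeing that \emph{every} algebraic *-representation of $A_{\C}(\F\ltimes_{\varphih}\HTCB)$ is bounded for the full groupoid C*-norm, so that its completion is genuinely $C^*(\F\ltimes_{\varphih}\HTCB)$ and not a proper quotient or a larger enveloping algebra. Presenting $C^*(\F\ltimes_{\varphih}\HTCB)$ by generators and relations and invoking Remark~\ref{r:core_subalg_univ} sidesteps this cleanly, provided one first confirms that this presentation really recovers the full groupoid C*-algebra of an ample Hausdorff groupoid; this identification, together with the verification that the isomorphism of \cite{BCGW22} is *-preserving, is where the genuine (though routine) work is concentrated. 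Everything else reduces to bookkeeping already performed at the algebraic level in \cite{BCGW22}.
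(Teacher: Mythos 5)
Your proposal is correct and follows essentially the same route as the paper: the paper likewise transfers the *-algebra isomorphism $\TCA_\C(\osf)\cong\CA_\C(\F\ltimes_{\varphih}\HTCB)$ coming from \cite{BCGW22} through dense core subalgebras, using Remark~\ref{r:core_subalg_univ} on the subshift side and Proposition~\ref{core iso} to conclude. The one point of divergence is the groupoid side: where you propose presenting $C^*(\F\ltimes_{\varphih}\HTCB)$ by generators and relations (and correctly flag that validating this presentation is the real work), the paper instead cites the proof of \cite[Proposition~6.7]{BenGroupoid} for the fact that the Steinberg algebra is a core subalgebra of the full groupoid C*-algebra.
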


\begin{proof}
    One the one hand, $\TCA_\C(\osf)$ is a core subalgebra of $\ucsalgshift$, by Remark~\ref{r:core_subalg_univ}. On the other hand, the proof of \cite[Proposition~6.7]{BenGroupoid} implies that $\CA_\C(\F\ltimes_{\varphih}\HTCB)$ is a core subalgebra of $C^*(\F\ltimes_{\varphih}\HTCB)$. The result then follows from Proposition~\ref{core iso}.
\end{proof}

\begin{definition}
    Let $P\scj\lang$ be a finite set. The follower set of $P$ is defined as the set $F_P:=\bigcap_{\alpha\in P}F_\alpha$. We say that $\osf$ satisfies condition (L) if for every finite set $P\scj\lang$ and every $\gamma\in\lang$ such that $\gamma^{\infty}\in F_P$, there exists another element in $F_P$ other than $\gamma^{\infty}$.
\end{definition}

The map $\iota:\osf\to\HTCB$ given by $\iota(x)=\{A\in\TCB:x\in A\}$ is injective with dense image \cite[Proposition 5.17]{BCGW22}.

\begin{lemma}\label{lemox}
    If $\{x\}\in\TCB$ for some $x\in\osf$, then $O_{\{x\}}=\{\iota(x)\}$.
\end{lemma}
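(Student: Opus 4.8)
The plan is to use the description of $\HTCB$ as the space of ultrafilters on the Boolean algebra $\TCB$ and to recognize $\iota(x)$ as the principal ultrafilter determined by the point $x$. Recall that each $\xi\in\HTCB$ is an ultrafilter on $\TCB$, hence it is upward closed (if $B\in\xi$, $B\scj A$ and $A\in\TCB$, then $A\in\xi$), closed under finite intersections, does not contain $\emptyset$, and satisfies $A\in\xi$ if and only if $A^c\notin\xi$ for every $A\in\TCB$. By definition $O_{\{x\}}=\{\xi\in\HTCB:\{x\}\in\xi\}$, while $\iota(x)=\{A\in\TCB:x\in A\}$, so the content of the statement is that the basic open set cut out by the atom $\{x\}$ is exactly the singleton $\{\iota(x)\}$.

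The inclusion $\{\iota(x)\}\scj O_{\{x\}}$ is immediate: since $x\in\{x\}$ and $\{x\}\in\TCB$ by hypothesis, we have $\{x\}\in\iota(x)$, i.e. $\iota(x)\in O_{\{x\}}$. For the reverse inclusion I would take an arbitrary $\xi\in O_{\{x\}}$, so that $\{x\}\in\xi$, and prove $\xi=\iota(x)$ by comparing the two ultrafilters element by element. Fix $A\in\TCB$. If $x\in A$, then $\{x\}\scj A$, and upward closure of $\xi$ forces $A\in\xi$. If instead $x\notin A$, then $\{x\}\scj A^c$, so the same upward closure gives $A^c\in\xi$, whence $A\notin\xi$ because an ultrafilter cannot contain a set together with its complement. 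Thus $A\in\xi$ exactly when $x\in A$, i.e. exactly when $A\in\iota(x)$; as $A\in\TCB$ was arbitrary, this yields $\xi=\iota(x)$.

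The argument is short, and no deep obstacle arises; the only point requiring care is invoking the correct structural properties of the elements of the Stone dual. The hypothesis $\{x\}\in\TCB$ is precisely what makes the statement meaningful: it guarantees that $\{x\}$ is a legitimate generator of a basic open set $O_{\{x\}}$, and it is what allows the singleton to be fed into the upward-closure property of $\xi$. The whole content is thus the observation that a singleton lying in an ultrafilter pins that ultrafilter down to the corresponding principal one.
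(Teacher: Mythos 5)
Your proof is correct and takes essentially the same route as the paper: the paper's one-line argument observes that $\uparrow\{x\}=\iota(x)$ is itself an ultrafilter (the principal one generated by the atom $\{x\}$), hence the unique element of $O_{\{x\}}$, and your element-by-element comparison via upward closure and complementation is just a detailed unpacking of that same maximality fact.
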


\begin{proof}
    In the case that $\{x\}\in\TCB$, we have that $\uparrow\{x\}\in\HTCB$. Hence the only element of $O_{\{x\}}$ is $\uparrow\{x\}=\iota(x)$.
\end{proof}

\begin{definition}
A topological partial action $(\{X_t,\theta_t\})_{t\in G}$ is topologically free if for every $e \neq g \in G$, the set $\{x \in X_{g^{-1}}: \theta_g(x) \neq x\}$ is dense in $X_{g^{-1}}$.
\end{definition}

\begin{theorem}\label{thm:(L)}
Let $\osf$ be a subshift over an arbitrary alphabet $\alf$. The following are equivalent:
\begin{enumerate}[(i)]
    \item\label{i:(L)} $\osf$ satisfies condition (L);
    \item\label{i:ev.per} $\{x\}\notin\TCB$ for every eventually periodic point $x\in\osf$;
    \item\label{i:eff} $\F\ltimes_{\varphih}\HTCB$ is effective;
    \item \label{i:top.free} the partial action $\varphih$ is topologically free. 
\end{enumerate}
Moreover, if the alphabet $\alf$ is countable, then the conditions above are also equivalent to 
\begin{enumerate}[(i)]
    \setcounter{enumi}{4}
    \item\label{i:top.princ} $\F\ltimes_{\varphih}\HTCB$ is topologically principal.
\end{enumerate}
\end{theorem}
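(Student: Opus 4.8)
The plan is to establish the cycle of implications $(\ref{i:(L)}) \Rightarrow (\ref{i:ev.per}) \Rightarrow (\ref{i:eff}) \Leftrightarrow (\ref{i:top.free})$, with $(\ref{i:eff})$ and $(\ref{i:top.free})$ being essentially a translation between the groupoid language and the partial action language, and then close the loop back to $(\ref{i:(L)})$. The equivalence of $(\ref{i:eff})$ and $(\ref{i:top.free})$ I expect to be the most routine part: the transformation groupoid $\F\ltimes_{\varphih}\HTCB$ is effective precisely when the partial action $\varphih$ is topologically free, and this is a standard fact for transformation groupoids of partial actions (the interior of the isotropy being trivial corresponds exactly to the density of the non-fixed-point sets). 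I would quote or reprove this correspondence directly from the description of the isotropy bundle, observing that a nontrivial element of the interior of the isotropy over $\xi$ forces an open set of units $\eta$ on which $\varphih_t(\eta) = \eta$ for some $e \neq t \in \F$, which is the negation of topological freeness.

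\textbf{Key steps.} First, for $(\ref{i:(L)}) \Leftrightarrow (\ref{i:ev.per})$, I would unwind the definitions. An eventually periodic point $x$ has the form $x = \mu\gamma^\infty$ for some $\mu \in \lang$ and cycle $\gamma$. The condition $\{x\} \in \TCB$ should be characterized combinatorially: since elements of $\TCB$ are finite Boolean combinations of the $C(\alpha,\beta)$, a singleton lies in $\TCB$ iff it can be isolated by such a combination, and this isolation is governed by whether the relevant follower set $F_P$ separates $\gamma^\infty$ from all other points. The bridge is to pass to the shifted picture: applying the shift to land $x$ on a purely periodic tail $\gamma^\infty$, the statement $\gamma^\infty$ is isolated in $F_P$ by a set of $\TCB$ becomes exactly the failure of condition (L). I would make this precise by showing $\{x\}\in\TCB$ for an eventually periodic $x$ is equivalent to the existence of a finite $P \subseteq \lang$ with $\gamma^\infty$ the unique element of $F_P$, which is the negation of condition (L).

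\textbf{Main obstacle.} For $(\ref{i:ev.per}) \Rightarrow (\ref{i:eff})$, the work is to show that a nontrivial interior isotropy element produces an eventually periodic point whose singleton lies in $\TCB$. A nontrivial element of $\operatorname{Iso}(\F\ltimes_{\varphih}\HTCB)^0$ over a unit $\xi$ corresponds to a reduced word $t = \beta\alpha^{-1}$ with $\varphih_t(\xi) = \xi$, holding on an open neighborhood $O_B$ of $\xi$. Using the injection $\iota:\osf \to \HTCB$ with dense image, such an open set must contain some $\iota(x)$, and the fixed-point condition $\varphih_{\beta\alpha^{-1}}(\iota(x)) = \iota(x)$ translates (via the formula for $\varphih$ and the relative range computations in \eqref{eq:rel.range}) into a shift-compatibility forcing $x$ to be eventually periodic. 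The delicate point, and where I expect the real difficulty, is running this in reverse to get the whole open set into a single point's isotropy: I would use Lemma~\ref{lemox}, which says that once $\{x\}\in\TCB$ the point $\iota(x)$ is \emph{isolated} as $O_{\{x\}} = \{\iota(x)\}$, so that the isotropy element sits in the interior. Conversely, if every eventually periodic point has non-isolating singleton, the relevant fixed units are nowhere dense, killing the interior isotropy and yielding effectiveness; this is the heart of $(\ref{i:eff}) \Rightarrow (\ref{i:(L)})$ as well. Finally, for the countable case, I would invoke that for second-countable (hence the countable alphabet hypothesis makes $\HTCB$ second countable) étale groupoids with locally compact Hausdorff unit space, effective and topologically principal coincide, giving $(\ref{i:eff}) \Leftrightarrow (\ref{i:top.princ})$ by a standard Baire category argument.
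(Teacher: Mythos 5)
Your skeleton is exactly the paper's: (i)$\Leftrightarrow$(ii) via a follower-set characterization of singletons in $\TCB$, (ii)$\Leftrightarrow$(iii) via the dense embedding $\iota$ and Lemma~\ref{lemox}, (iii)$\Leftrightarrow$(iv) as the standard dictionary between effective transformation groupoids and topologically free partial actions, and the countable case by a Baire-type argument (the paper cites \cite{MR4009562} and \cite{XinLi} for these last two points, and your versions are unproblematic). The problem is that in both substantive steps the idea that makes the step work is missing. For (i)$\Leftrightarrow$(ii) you reduce everything to the claim that, for an eventually periodic point $x=\alpha\beta^\infty$, one has $\{x\}\in\TCB$ if and only if $F_P=\{\beta^\infty\}$ for some finite $P\subseteq\lang$. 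The backward direction is easy ($F_P\in\TCB$, and $\{\alpha\beta^\infty\}=\bigcap_{\gamma\in P}C(\gamma,\alpha)$), but for the forward direction you offer no mechanism, and this is the technical heart of the theorem: a singleton in $\TCB$ is a priori a finite Boolean combination of sets $C(\alpha_i,\beta_i)$ involving complements, whereas condition (L) quantifies only over plain intersections of follower sets, so one must eliminate both the cylinder parts and the complements. The paper does this with the relative range: $r(\{\alpha\beta^\infty\},\alpha\beta^i)=\{\beta^\infty\}$ together with \eqref{eq:rel.range} rewrites the singleton, for large $i$, as $F_{\gamma_1}\cap\cdots\cap F_{\gamma_k}\cap F_{\delta_1}^c\cap\cdots\cap F_{\delta_l}^c$; then, since $\beta^\infty\notin F_{\delta_j}$ forces $\delta_j\beta^m\notin\lang$ for large $m$, applying $r(\cdot\,,\beta^m)$ once more annihilates every complemented term and leaves $\{\beta^\infty\}=F_{\gamma_1\beta^m}\cap\cdots\cap F_{\gamma_k\beta^m}$. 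Nothing in your sketch plays the role of this complement-elimination step; "this isolation is governed by whether $F_P$ separates $\gamma^\infty$" restates the conclusion rather than proving it.

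In (ii)$\Rightarrow$(iii) (contrapositively: non-effectiveness produces an eventually periodic singleton in $\TCB$), your argument stops at producing one eventually periodic $x$ with $\iota(x)$ in the fixed open set $O_B$, and then invokes Lemma~\ref{lemox} — but that lemma presupposes $\{x\}\in\TCB$, which is precisely what this direction must establish, so as written the step is circular. (Lemma~\ref{lemox} belongs only to the converse direction, not-(ii) $\Rightarrow$ not-(iii), where you do use it correctly.) The missing argument is a uniqueness statement downstairs: the only point of $\osf$ fixed by $\tauh_{\alpha\beta\alpha^{-1}}$ is $\alpha\beta^\infty$, and since $O_B\cap\iota(\osf)=\iota(B)$ with $\iota$ injective and equivariant, \emph{every} point of $B$ must equal $\alpha\beta^\infty$; hence $B=\{\alpha\beta^\infty\}$ and this singleton lies in $\TCB$ because $B$ does. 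Your phrase "the relevant fixed units are nowhere dense" is the conclusion of this argument, not a justification for it. Both gaps are repairable within your outline — the outline itself is the paper's — but they are the two places where the actual proof content lives.
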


\begin{proof}
    \eqref{i:(L)}$\Rightarrow$\eqref{i:ev.per} Suppose that for an eventually periodic point $\alpha\beta^{\infty}\in\osf$ we have that $\{\alpha\beta^{\infty}\}\in\TCB$. We will prove that there exists $P\scj\lang$ finite such that $F_P=\{\beta^\infty\}$ so that $\osf$ does not satisfy condition (L). The idea for the proof is similar to what is done in part of the proof of \cite[Proposition~7.10]{GillesDanie}. Since $\{\alpha\beta^{\infty}\}$ is a singleton in $\TCB$ it must be of the form
    \[\{\alpha\beta^{\infty}\}=C(\alpha_1,\beta_1)\cap\ldots\cap C(\alpha_n,\beta_n)  \cap C(\mu_1,\nu_1)^c\cap \ldots \cap C(\mu_m,\nu_m)^c.\]
    Note that for any $i\in\nn$, we have that, $r(\{\alpha\beta^{\infty}\},\alpha\beta^i)=\{\beta^{\infty}\}$. By applying \eqref{eq:rel.range} for sufficient large $i$, we see that $\{\beta^{\infty}\}$ is of the form
    \[\{\beta^{\infty}\}=F_{\gamma_1}\cap\cdots\cap F_{\gamma_k}\cap F_{\delta_1}^c\cap\cdots\cap F_{\delta_l}^c.\]
    Note that for $j\in\nn$ large enough, we have that $\delta_1\beta^j,\ldots,\delta_l\beta^j\notin\lang$. Taking the relative range with respect to $\beta^j$ and using \eqref{eq:rel.range}, we then see that
    \[\{\beta^{\infty}\}=F_{\gamma_1\beta^j}\cap\cdots\cap F_{\gamma_k\beta^j},\]
    from where it follows that $\osf$ does not satisfy condition (L).

    \eqref{i:ev.per}$\Rightarrow$\eqref{i:(L)} Suppose that $\osf$ does not satisfy condition (L). In this case there exist $P\scj\lang$ finite and $\gamma\in\lang$ such that $F_P=\{\gamma^{\infty}\}$. Since $F_P\in\TCB$, we found a periodic point $x$ such that $\{x\}\in\TCB$.

    For the equivalence \eqref{i:ev.per}$\Leftrightarrow$\eqref{i:eff}, we note that if $\varphih_t(\xi)=\xi$ for some $t\in\F\setminus\{\eword\}$, then $t=\alpha\beta\alpha^{-1}$ or $t=\alpha\beta^{-1}\alpha^{-1}$, where $\alpha,\beta\in\lang$ are such that $\pi(\xi)=\alpha\beta^\infty$ (see \cite[Remark 6.11]{GillesDanie}). Moreover, $\varphih_{t^{-1}}(\xi)=\xi$,  so we can consider only the case $t=\alpha\beta\alpha^{-1}$.

    \eqref{i:ev.per}$\Rightarrow$\eqref{i:eff} Suppose that $\F\ltimes_{\varphih}\HTCB$ is not effective. In this case, there exists $\alpha,\beta\in\lang$ with $|\beta|\geq 1$ and $A\in\TCB$ such that $\emptyset\neq A\scj C(\alpha\beta,\alpha)$ and $O_A\times\{\alpha\beta\alpha^{-1}\}\times O_A\scj\operatorname{Iso}(\F\ltimes_{\varphih}\HTCB)$.  Note that for every $x\in A$, we have $\iota(x)\in O_A$ and if $x$ is not of the form $\alpha\beta^{\infty}$, then $(\varphih_{\alpha\beta\alpha^{-1}}(\iota(x)),\alpha\beta\alpha^{-1},\iota(x))\notin \operatorname{Iso}(\F\ltimes_{\varphih}\HTCB)$. Hence $\{\alpha\beta^{\infty}\}=A\in\TCB$.

    \eqref{i:eff}$\Rightarrow$\eqref{i:ev.per} Suppose that, for $x=\alpha\beta^\infty$, we have that $\{x\}\in\TCB$. By Lemma~\ref{lemox}, $O_{\{x\}}=\{\iota(x)\}$.  By \cite[Proposition 5.3]{BCGW22}, there is a partial action $\tauh$ 
 of $\F$ on $\osf$ such that $\tauh_{\alpha\beta\alpha^{-1}}(x) = \alpha\beta^\infty =x$. By \cite[Proposition 5.17]{BCGW22}, the map $\iota:\osf\to\HTCB$ is equivariant with respect to $\varphih$ and $\tauh$. Hence $\varphih_{\alpha\beta\alpha^{-1}}(\iota(x))=\iota(x)$, so that $O_{\{x\}}\times\{\alpha\beta\alpha^{-1}\}\times O_{\{x\}}\scj\operatorname{Iso}(\F\ltimes_{\varphih}\HTCB)$. It follows that $\F\ltimes_{\varphih}\HTCB$ is not effective. 
    
    \eqref{i:eff}$\Leftrightarrow$\eqref{i:top.free} Notice that our definition of topologically free partial actions coincides with the definition of effective partial actions given in \cite{MR4009562}. The result now follows from \cite[Propositions~7.3]{MR4009562}.

    In the case that the alphabet is countable, the free group $\F$ is also countable, so \eqref{i:top.free}$\Leftrightarrow$\eqref{i:top.princ} follows from \cite[Corollary~2.5]{XinLi}.
\end{proof}

Recall the definition of continuous orbit equivalence of partial actions:

\begin{definition}\cite[Definition~2.6]{XinLi}
Partial actions $(\{X_g,\theta_g\})_{g\in G}$ and $(\{Y_h,\psi_h\})_{h\in H}$ are called continuously orbit equivalent if there exists a homeomorphism $\varphi: X \rightarrow Y$ and continuous maps $a: \: \bigcup_{g \in G} \{g\} \times X_{g^{-1}} \to H$, $b: \: \bigcup_{h \in H} \{h\} \times Y_{h^{-1}} \to G$ such that
$$  \varphi(\theta_g(x)) = \psi_{a(g,x)}(\varphi(x)), \text{ and }
  \varphi^{-1}(\psi_h(y)) = \theta_{b(h,y)}(\varphi^{-1}(y)).
$$
\end{definition}

Next, we characterize continuous orbit equivalence of the partial actions associated with a pair of subshifts that satisfy Condition~(L), in terms of their associated groupoids and C*-algebras.

\begin{theorem}\label{diadesol}
Let $\osf$ and $\osfY$ be subshifts over countable alphabets satisfying condition (L). Let $\varphih_\osf$ and $\varphih_\osfY$ be the corresponding partial actions of $\F_\osf$ and $\F_\osfY$ on $\HTCB_\osf$ and $\HTCB_\osfY$, respectively. Then, the following are equivalent:
\begin{enumerate}
\item[(i)] $\varphih_\osf$ and $\varphih_\osfY$ are continuously orbit equivalent,
\item[(ii)] the transformation groupoids $\F_\osf \ltimes \HTCB_\osf$ and $\F_\osfY \ltimes \HTCB_\osfY$ are isomorphic as topological groupoids,
\item[(iii)] there exists an isomorphism $\Phi: \ucsalgshift \rightarrow \ucsalgshiftY$ with $\Phi(C(\widehat{\mathcal {U}_X})) = C(\widehat{\mathcal {U}_Y})$.
\end{enumerate}
Moreover, (ii) implies (i) holds even if condition (L) is not satisfied.
\end{theorem}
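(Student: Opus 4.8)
The theorem is a three-way equivalence (i)$\Leftrightarrow$(ii)$\Leftrightarrow$(iii) relating continuous orbit equivalence of the partial actions, isomorphism of the transformation groupoids, and isomorphism of the C*-algebras preserving the distinguished commutative subalgebra $C(\HTCB)$. The strategy is to route everything through the groupoid picture, using Theorem~\ref{iso.partial} to identify $\ucsalgshift$ with $C^*(\F_\osf \ltimes_{\varphih} \HTCB_\osf)$ and Theorem~\ref{thm:(L)} to guarantee that condition (L) makes these groupoids effective (and, since the alphabets are countable, topologically principal). Effectiveness plus a Hausdorff, ample, second-countable groupoid is exactly the setting of the groupoid reconstruction theorems, so the plan is to establish (i)$\Leftrightarrow$(ii) by unwinding definitions and (ii)$\Leftrightarrow$(iii) by invoking such a reconstruction result.

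\textbf{Step 1: (i)$\Leftrightarrow$(ii).}
This is the most hands-on part and I would treat it first. A transformation groupoid $\F \ltimes_{\varphih} \HTCB$ is built canonically from the partial action, so a continuous orbit equivalence of partial actions and a topological groupoid isomorphism are really two encodings of the same data. For (i)$\Rightarrow$(ii) I would take the homeomorphism $\varphi:\HTCB_\osf\to\HTCB_\osfY$ together with the continuous cocycle-type maps $a$ and $b$ from the definition of continuous orbit equivalence and define a candidate groupoid map by $(\xi,t,\eta)\mapsto(\varphi(\xi),a(t,\dots),\varphi(\eta))$, checking that it respects the range/source maps, multiplication, and inverses, and that continuity of $a,b$ yields continuity of the groupoid isomorphism. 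For (ii)$\Rightarrow$(i) I would reverse this: a groupoid isomorphism restricts to a homeomorphism of unit spaces (yielding $\varphi$), and reading off the $\F$-coordinate of the image of $(\varphih_g(\eta),g,\eta)$ defines the map $a$, with $b$ coming from the inverse isomorphism. The key subtlety here is well-definedness of $a$ and $b$: a priori the group element attached to a point is only unique up to isotropy, so this is precisely where effectiveness (from condition~(L) via Theorem~\ref{thm:(L)}) is needed to pin down $a$ and $b$ uniquely and make them continuous. This is exactly the mechanism in \cite[Section~2]{XinLi}, so I would structure the argument to parallel that reference. Importantly, the implication (i)$\Rightarrow$(ii) uses continuous orbit equivalence to \emph{construct} the groupoid isomorphism and never needs uniqueness of group labels, which is why the final sentence of the theorem — that (ii)$\Rightarrow$(i) holds even without condition~(L) — is the direction I must take care \emph{not} to lean on effectiveness for; there the groupoid isomorphism is given and simply yields $\varphi,a,b$ directly.

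\textbf{Step 2: (ii)$\Leftrightarrow$(iii).}
For (ii)$\Rightarrow$(iii), a topological groupoid isomorphism functorially induces a C*-algebra isomorphism $C^*(\F_\osf\ltimes\HTCB_\osf)\cong C^*(\F_\osfY\ltimes\HTCB_\osfY)$, which after Theorem~\ref{iso.partial} gives $\Phi:\ucsalgshift\to\ucsalgshiftY$; since the isomorphism carries the unit space homeomorphically, it restricts to an isomorphism of the subalgebras supported on the units, which are exactly the copies of $C(\HTCB)$ by Proposition~\ref{nublado}, yielding $\Phi(C(\widehat{\mathcal{U}_X}))=C(\widehat{\mathcal{U}_Y})$. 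For the converse (iii)$\Rightarrow$(ii), I would invoke a groupoid reconstruction theorem: given a $*$-isomorphism of the groupoid C*-algebras that intertwines the canonical Cartan (diagonal) subalgebras $C(\HTCB)$, one recovers an isomorphism of the underlying effective, ample, Hausdorff groupoids. This is where the countability hypothesis and effectiveness pay off — they ensure the groupoids are topologically principal and second-countable, so the Cartan-pair reconstruction results (in the style of Renault, and of the reduction-to-diagonal arguments in \cite{XinLi}) apply.

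\textbf{Main obstacle.}
The hardest part is Step~1's well-definedness and continuity of the orbit-equivalence cochains $a,b$, and more generally verifying the precise hypotheses (Hausdorffness, ampleness, second countability, and effectiveness) under which the abstract reconstruction theorem of Step~2 is valid. Establishing that the distinguished subalgebra $C(\HTCB)$ is genuinely a Cartan/diagonal subalgebra in the required technical sense — so that the reconstruction machinery applies — is the crux; once that is in place, both nontrivial implications follow from citing \cite{XinLi} (and the reconstruction literature) essentially verbatim. I would therefore front-load the verification that condition~(L) yields an effective groupoid and that $\F\ltimes\HTCB$ meets all standing hypotheses, and then present the two equivalences as applications.
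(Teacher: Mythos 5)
Your overall architecture is in fact the paper's: the paper's entire proof of Theorem~\ref{diadesol} is a citation of \cite[Theorem~2.7]{XinLi} combined with Theorem~\ref{thm:(L)}, and your two steps are precisely the two halves of Li's theorem, which you propose to reprove by paralleling \cite{XinLi}. The genuine problem is that your Step~1 locates the use of topological freeness in the wrong implication, and this is not cosmetic --- it is the mathematical heart of the step. In (ii)$\Rightarrow$(i) there is no well-definedness issue at all: given a topological groupoid isomorphism $\Psi$, the group label $a(t,\eta)$ is \emph{by definition} the $\F_\osfY$-coordinate of $\Psi(\varphih_t(\eta),t,\eta)$, a single unambiguous element since elements of $\F_\osfY\ltimes\HTCB_\osfY$ are triples; continuity is immediate. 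That is exactly why the ``Moreover'' clause asserts (ii)$\Rightarrow$(i) without condition (L), and your own last sentence of Step~1 concedes this, flatly contradicting your claim two sentences earlier that effectiveness ``is needed to pin down $a$ and $b$ uniquely and make them continuous.''

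Conversely, the direction you claim ``never needs uniqueness of group labels,'' namely (i)$\Rightarrow$(ii), is where topological freeness is indispensable. The candidate map $(\xi,t,\eta)\mapsto(\varphi(\xi),a(t,\eta),\varphi(\eta))$ respects the groupoid multiplication only if $a$ satisfies the cocycle identity $a(st,\eta)=a(s,\varphih_t(\eta))\,a(t,\eta)$ on appropriate domains, and the definition of continuous orbit equivalence yields this only up to isotropy: both sides carry $\varphi(\eta)$ to $\varphi(\varphih_{st}(\eta))$, so their discrepancy fixes $\varphi(\eta)$ but need not be trivial. One needs density of points with trivial isotropy --- i.e.\ topological principality, which is what condition (L) plus countability delivers via Theorem~\ref{thm:(L)}\eqref{i:top.princ} --- together with continuity of $a$ to force the identity on a dense set and hence everywhere (and the same mechanism is needed to see that the maps built from $a$ and $b$ are mutually inverse). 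As planned, your proof of (i)$\Rightarrow$(ii) would therefore be broken, while your proof of (ii)$\Rightarrow$(i) is correct but misexplained. Your Step~2 is sound: (ii)$\Rightarrow$(iii) is functoriality plus Theorem~\ref{iso.partial} and Proposition~\ref{nublado}, and (iii)$\Rightarrow$(ii) is Renault-type reconstruction, which is the second (and only other) place where topological principality and second countability of $\HTCB$ (available since the Boolean algebra $\TCB$ is countable for a countable alphabet) genuinely enter.
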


\begin{proof}
    The equivalences follow from \cite[Theorem~2.7]{XinLi} and Theorem~\ref{thm:(L)}.
\end{proof}

We now consider the problem of finding invariants for topological conjugacy for OTW-subshifts and how it relates to the isomorphism of subshift C*-algebras. We do this by adding some equivalent conditions to \cite[Theorem~7.6]{BCGW22}. For this we use the Deaconu-Renault groupoid $\CG(\HTCB,\hsig)$ associated with $\osf$ (\cite{BCGW22}), where $\HTCB$ is the Stone dual of $\TCB$ as before, and $\hsig$ has domain  $\dom(\hsig):=\bigcup_{a\in\alf} V_a = \bigcup_{a\in\alf} O_{Z_a}$ and is defined by 
\[\hsig(\xi):=\varphih_{a^{-1}}(\xi),\]
for $\xi \in V_a$. See \cite[Section~6]{BCGW22} for more details. By Theorem~\ref{iso.partial} and \cite[Theorem~6.4]{BCGW22}, we have that $\ucsalgshift\cong C^*(\CG(\HTCB,\hsig))$.

Given an OTW-subshift $\osf^{OTW}$ and its corresponding subshift $\osf$, \cite[Proposition~6.3]{BCGW22} provides a surjective map $\pi:\HTCB\to\osf^{OTW}$ such that $\pi(\hsig(\xi))=\sigma(\pi(\xi))$ for all $\xi\in\dom(\hsig)$. Moreover, as in \cite[Remark~6.2]{BCGW22}, one can use Proposition~\ref{nublado} to show that $C(\osf^{OTW})$ is isomorphic to a *-subalgebra of $C(\HTCB)$ by sending $f\in C(\osf^{OTW})$ to $f\circ \pi$. We denote this *-subalgebra by $\pi^*(C(\osf^{OTW}))$.

We recall the definition of a conjugacy between OTW-subshifts.

\begin{definition}
    Let $ \osf^{OTW}$ and $ \osfY^{OTW}$ be OTW-subshifts over alphabets $\alf_1$ and $\alf_2$, respectively. A map $h:  \osf^{OTW}\rightarrow \osfY^{OTW}$ is a \emph{conjugacy} if it is a homeomorphism, commutes with the shifts and is length-preserving.
\end{definition}

In Theorem~\ref{stariso} we characterize conjugacy of OTW-subshifts. For that, the idea is to invoke  \cite[Theorem~3.1]{TopConjLocHom}. However, the shift map of an OTW-subshift is not a local homomorphism in general. For instance, a subshift over a finite alphabet is a local homeomorphism if and only if it is of finite type \cite[Proposition~2.5]{MishaRuy}.  Nevertheless, using \cite[Proposition~7.5 and Theorem~7.6]{BCGW22} we can rephrase the problem in terms of a conjugacy between the corresponding Deaconu-Renault system of their subshifts.

Before the next theorem, we briefly adapt a definition used in \cite[Theorem~3.1]{TopConjLocHom} to our needs. Given a subshift $\osf$ and $f\in C(\HTCB,\mathbb{Z})$, we define the \emph{weighted action} $\gamma^{\osf,f}:\mathbb{T}\curvearrowright \ucsalgshift$ on the generators by $\gamma^{\osf,f}_z(p_A)=p_A$ and $\gamma^{\osf,f}_z(s_a)=z^fs_a$ for all $A\in\TCB$, $a\in\alf$ and $z\in\mathbb{T}$, where $z^f$ is to be interpreted as an element of $C(\HTCB)$ seen as a subalgebra of $\ucsalgshift$. More specifically, $z^f:\HTCB\to\mathbb{C}$ is defined by $z^f(\xi)=z^{f(\xi)}$ for each $z\in\mathbb{T}$ and $\xi\in\HTCB$.

\begin{theorem}\label{stariso}
    Let $\osf$ and $\osfY$ be subshifts over countable alphabets and $h:  \osf^{OTW}\rightarrow \osfY^{OTW}$ a homeomorphism. The following are equivalent:
    \begin{enumerate}[(i)]
    \item\label{iconj} $h$ is a conjugacy.
    \item\label{ihh} There exists a homeomorphism $\hh:\HTCB_\osf\to \HTCB_\osfY$ such that $h\circ\pi_\osf=\pi_\osfY\circ\hh$, $\hh(\dom(\hsig_\osf))=\dom(\hsig_\osfY)$ and $\hh\circ\hsig_\osf=\hsig_\osfY\circ\hh|_{\dom(\hsig_\osf)}$.
    \item\label{iC*} There exists a *-isomorphism $\Phi: \ucsalgshift \rightarrow \ucsalgshiftY$ satisfying the following conditions:
    \begin{enumerate}[(a)]
        \item $\Phi(\pi_\osf^*(C(\osf^{OTW}))) = \pi_\osfY^*(C(\osfY^{OTW}))$,
        \item $\Phi(f\circ\pi_\osf)=f\circ h^{-1}\circ\pi_{\osfY}$ for all $f\in C(\osf^{OTW})$,
        \item there exists a homeomorphism $\widetilde{h}:\HTCB_\osf\to \HTCB_\osfY$ such that $\Phi\circ\gamma_z^{\osf,g\circ\widetilde{h}}=\gamma^{\osfY,g}_z\circ\Phi$ for all $g\in C(\osfY,\mathbb{Z})$ and $z\in\mathbb{T}$.
    \end{enumerate}
    \end{enumerate}
\end{theorem}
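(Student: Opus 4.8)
The plan is to split the triple equivalence into a dynamical half, \eqref{iconj}$\Leftrightarrow$\eqref{ihh}, and an operator-algebraic half, \eqref{ihh}$\Leftrightarrow$\eqref{iC*}, exploiting throughout the fact that although the shift $\sigma$ on an OTW-subshift need not be a local homeomorphism, the induced map $\hsig$ on the Stone dual $\HTCB$ is one. The semiconjugacy $\pi:\HTCB\to\osf^{OTW}$ of \cite[Proposition~6.3]{BCGW22}, satisfying $\pi\circ\hsig=\sigma\circ\pi$, is the device that transports the problem between the two pictures, and the identification $\ucsalgshift\cong C^*(\CG(\HTCB,\hsig))$ from Theorem~\ref{iso.partial} and \cite[Theorem~6.4]{BCGW22} lets us read the C*-side entirely in terms of the Deaconu-Renault groupoid of the local homeomorphism $\hsig$.

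For \eqref{iconj}$\Leftrightarrow$\eqref{ihh} I would invoke \cite[Proposition~7.5 and Theorem~7.6]{BCGW22}: a conjugacy $h$ of OTW-subshifts is matched exactly by a conjugacy $\hh$ of the Deaconu-Renault systems $(\HTCB_\osf,\hsig_\osf)$ and $(\HTCB_\osfY,\hsig_\osfY)$ that is compatible with the semiconjugacies, i.e.\ satisfies $h\circ\pi_\osf=\pi_\osfY\circ\hh$, $\hh(\dom(\hsig_\osf))=\dom(\hsig_\osfY)$ and $\hh\circ\hsig_\osf=\hsig_\osfY\circ\hh$. Here the countability of the alphabet is used to guarantee that $\TCB$, and hence $\HTCB$, is second countable, so that $(\HTCB,\hsig)$ is a genuine second countable Deaconu-Renault system to which the C*-algebraic machinery applies.

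For \eqref{ihh}$\Leftrightarrow$\eqref{iC*} I would apply \cite[Theorem~3.1]{TopConjLocHom} to the local homeomorphisms $\hsig_\osf$ and $\hsig_\osfY$. That result says that a conjugacy $\hh$ of the two systems corresponds to a $*$-isomorphism $\Phi$ of the groupoid C*-algebras which preserves the canonical diagonals $C(\HTCB)$, with induced homeomorphism $\widetilde{h}=\hh$, and intertwines the weighted gauge actions reparametrised by $\widetilde{h}$; this is exactly condition~(c). To obtain conditions~(a) and~(b), I would note that $\Phi$ restricts on the diagonal to $F\mapsto F\circ\hh^{-1}$, so that combining with $h\circ\pi_\osf=\pi_\osfY\circ\hh$, equivalently $\pi_\osf\circ\hh^{-1}=h^{-1}\circ\pi_\osfY$, gives for every $f\in C(\osf^{OTW})$
\[
\Phi(f\circ\pi_\osf)=f\circ\pi_\osf\circ\hh^{-1}=f\circ h^{-1}\circ\pi_\osfY,
\]
which is condition~(b); since $f\mapsto f\circ h^{-1}$ is a bijection $C(\osf^{OTW})\to C(\osfY^{OTW})$, this also yields condition~(a), where Proposition~\ref{nublado} is used to place $\pi^*(C(\osf^{OTW}))$ inside $C(\HTCB)\subseteq\ucsalgshift$. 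Conversely, given $\Phi$ satisfying \eqref{iC*}, condition~(c) feeds into \cite[Theorem~3.1]{TopConjLocHom} to produce the conjugacy $\widetilde{h}=\hh$ of the Deaconu-Renault systems, while conditions~(a) and~(b), read through Gelfand duality and Proposition~\ref{nublado}, force the descent relation $h\circ\pi_\osf=\pi_\osfY\circ\hh$, recovering \eqref{ihh} and hence, via the first half, \eqref{iconj}.

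I expect the main obstacle to be the bookkeeping around the two nested commutative subalgebras. The theorem of \cite{TopConjLocHom} is naturally phrased in terms of the full diagonal $C(\HTCB)$ and the homeomorphism $\widetilde{h}$ it induces, whereas conjugacy of the OTW-subshifts is encoded by the strictly smaller subalgebra $\pi^*(C(\osf^{OTW}))$ together with the descent formula in condition~(b). Showing that these two layers are compatible, namely that a diagonal-preserving, weighted-gauge-intertwining $\Phi$ is the one induced by $\hh$ and that its action on $\pi^*(C(\osf^{OTW}))$ is governed by $h$ via the semiconjugacy $\pi$, is the step that requires the most care. The remaining verifications, that $\Phi$ preserves the diagonal and that $g\circ\widetilde{h}$ is the correct reparametrisation of the cocycle, are a direct translation of \cite[Theorem~3.1]{TopConjLocHom} and should be routine.
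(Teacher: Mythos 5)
Your proposal follows essentially the same route as the paper's proof: the equivalence \eqref{iconj}$\Leftrightarrow$\eqref{ihh} is delegated to \cite[Theorem~7.6]{BCGW22}, and the equivalence \eqref{ihh}$\Leftrightarrow$\eqref{iC*} is obtained by applying the conjugacy-versus-isomorphism machinery of \cite{TopConjLocHom} to the Deaconu--Renault systems $(\HTCB_\osf,\hsig_\osf)$ and $(\HTCB_\osfY,\hsig_\osfY)$, with conditions (a) and (b) extracted from the relation $h\circ\pi_\osf=\pi_\osfY\circ\hh$ and, conversely, that relation recovered from (b) together with $\Phi(g)=g\circ\hh^{-1}$ on the diagonal, exactly as in the paper. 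The only cosmetic difference is the citation: the paper's proof invokes \cite[Proposition~3.12(i),(ii)]{TopConjLocHom} (the two directions stated separately, with the weighted gauge actions built in) rather than \cite[Theorem~3.1]{TopConjLocHom}, which is the result the paper itself names as the guiding idea.
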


\begin{proof}
    The equivalence between \eqref{iconj} and \eqref{ihh} is proved in \cite[Theorem~7.6]{BCGW22}.

    \eqref{ihh}$\Rightarrow$\eqref{iC*} By \cite[Proposition~3.12(i)]{TopConjLocHom}, there exists a *-homomorphism $\Phi:\ucsalgshift\to\ucsalgshiftY$ such that $\Phi(C(\widehat{\mathcal {U}_X})) = C(\widehat{\mathcal {U}_Y})$, $\Phi(f)=f\circ\hh^{-1}$ for all $f\in C(\widehat{\mathcal {U}_X})$ and (c) of item \eqref{iC*}. Items (a) and (b) follow from the hypothesis that $h\circ\pi_\osf=\pi_\osfY\circ\hh$.

    \eqref{iC*}$\Rightarrow$\eqref{ihh} By \cite[Proposition~3.12(ii)]{TopConjLocHom}, there exists a homeomorphism $\hh:\HTCB_\osf\to \HTCB_\osfY$ such that $\hh(\dom(\hsig_\osf))=\dom(\hsig_\osfY)$, $\hh\circ\hsig_\osf=\hsig_\osfY\circ\hh|_{\dom(\hsig_\osf)}$ and $\Phi(g)=g\circ\hh^{-1}$ for all $g\in C(\widehat{\mathcal {U}_X})$. To prove the last part of \eqref{ihh}, take any $f\in C(\osf^{OTW})$ and $\xi\in\widehat{\mathcal {U}_X}$. On the one hand, \[\Phi(f\circ\pi_\osf)(\hh(\xi))=f(\pi_\osf(\hh^{-1}(\hh(\xi))))=f(\pi_\osf(\xi)).\] On the other hand, by hypothesis, we have that
    \[\Phi(f\circ\pi_\osf)(\hh(\xi))=f(h^{-1}(\pi_\osfY(\hh(\xi)))).\]
    Since $f$ was arbitrary and $h$ is bijective, we obtain that $h(\pi_\osf(\xi))=\pi_\osfY(\hh(\xi))$. And, since $\xi$ was arbitrary, we have that $h\circ\pi_\osf=\pi_\osfY\circ\hh$.
\end{proof}

For a countable alphabet $\alf$, endow $\prod_{n\in\nn} \alf$ with the product metric 
\begin{equation}\label{SBC} 
d((x_n),(y_n))= \frac{1}{2^k},
\end{equation} 
where $k$ is such that $x_i=y_i$ for $i=0,\ldots k-1$ and $x_k\neq y_k$. This metric induces the product topology on $\prod \alf_{n\in\nn}$. Viewing $\prod_{n\in\nn} \alf$ as topological space with topology induced by this metric, a subshift means a closed and invariant subset $\osf\subseteq \prod_{n\in\nn}\alf$. Thus, any subshift is complete in this metric. Notice that if $\alf$ is infinite then $\osf\subseteq \osf^{OTW}$ is, in general, not complete in the metric of $\osf^{OTW}$, which shows these two metrics are not strongly equivalent. In particular, a conjugacy of subshifts $\osf_1$ and $\osf_2$ does not automatically extend to a conjugacy of $\osf^{OTW}_1$ and $\osf^{OTW}_2$. However, we do have the following extension result.

\begin{theorem}\label{aguaceiro} Let $\osf_1$ and $\osf_2$ be subshifts over countable alphabets. Abusing the notation, let $(\osf_1,\sigma)$ and $(\osf_2,\sigma)$ be the subshifts equipped with the metric $d$ in (\ref{SBC}). If $(\osf_1,\sigma)$ is isometrically conjugate to $(\osf_2,\sigma)$ then the respective OTW subshifts, $\osf^{OTW}_1$ and $\osf^{OTW}_2$, are conjugate. Moreover, if $\osf^{OTW}_1$ and $\osf^{OTW}_2$ are conjugate then $(\osf_1,\sigma)$ and $(\osf_2,\sigma)$ are conjugate.   
\end{theorem}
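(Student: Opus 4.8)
The statement has two independent implications, and the plan is to treat them separately. For the first implication, suppose $\Psi\colon(\osf_1,\sigma)\to(\osf_2,\sigma)$ is an isometric conjugacy, i.e. a surjective isometry for the metric $d$ in \eqref{SBC} that intertwines the shifts. The key observation is that the metric $d$ is rigid enough to pin down the coordinates: since $d(x,y)=2^{-k}$ records exactly the first index where $x$ and $y$ disagree, an isometry must preserve the filtration of $\osf_i$ by cylinder sets. Concretely, I would first show that $\Psi$ induces, for each length $n$, a bijection between the level-$n$ cylinder structure of $\osf_1$ and that of $\osf_2$; combined with shift-commutativity this forces $\Psi$ to be given by a $1$-block (or sliding-block) code that is length-preserving on finite words. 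The plan is then to extend this code to the OTW-subshifts: a length-preserving conjugacy of subshifts that respects cylinders acts on finite words in $\lang_{\osf_1}$ and $\lang_{\osf_2}$, and by the definition of $\osf^{fin}$ and $\osf^{inf}$ one checks that the induced map sends the finite and infinite parts of $\osf_1^{OTW}$ to those of $\osf_2^{OTW}$. The homeomorphism property with respect to the generalized-cylinder topology follows because the code is continuous and the cofinite condition defining $\osf^{fin}$ (``infinitely many $a\in\alf$ admit an extension'') is transported by a bijection on the alphabet induced from the $1$-block code.

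\textbf{The second implication.} For the converse, suppose the OTW-subshifts $\osf^{OTW}_1$ and $\osf^{OTW}_2$ are conjugate via a length-preserving homeomorphism $h$ commuting with the shifts. Since a conjugacy is length-preserving, it restricts to a bijection $\osf^{inf}_1=\osf_1\to\osf_2=\osf^{inf}_2$ (the infinite points are exactly the length-$\infty$ points, and these are preserved). The plan is to show that this restriction $h|_{\osf_1}$ is a conjugacy of the subshifts $(\osf_1,\sigma)$ and $(\osf_2,\sigma)$ in the product-metric sense. It commutes with $\sigma$ by hypothesis; what remains is continuity of $h|_{\osf_1}$ and of its inverse for the product topology on $\osf_i\subseteq\prod_{n}\alf_i$. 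Here I would use that $h$, being a conjugacy of OTW-subshifts, is in fact determined by a sliding-block code on finite prefixes (length-preservation plus shift-equivariance forces the image of $x_0\cdots x_{n-1}$ to depend only on a bounded window), so $h|_{\osf_1}$ is continuous for the product topology; the same argument applied to $h^{-1}$ gives continuity of the inverse. This yields a (topological, not necessarily isometric) conjugacy, which is exactly what the ``Moreover'' clause asserts.

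\textbf{Main obstacle.} The delicate point, and the one I expect to require the most care, is the first implication: passing from an \emph{isometric} conjugacy of the incomplete product-metric subshifts to a \emph{topological} conjugacy of the compact OTW-completions. The difficulty is that the embedding $\osf_i\hookrightarrow\osf_i^{OTW}$ is not uniformly continuous between the two metrics (the excerpt explicitly notes $\osf$ need not be complete in the OTW metric and that the two metrics are not strongly equivalent), so an isometry of $(\osf_1,d)$ does not a priori extend continuously to the boundary points of $\osf^{fin}$. The way I would overcome this is \emph{combinatorially} rather than by a direct metric extension argument: the isometry must preserve the first-disagreement structure, hence the $1$-block decomposition $\osf_i=\bigsqcup_{a}Z_a$ and all the cylinder sets, so it is encoded by a bijection of alphabets (or a sliding-block code) on the level of $\lang_{\osf_1}$ and $\lang_{\osf_2}$. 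Once the conjugacy is recognized as a sliding-block code on words, extending it to $\osf^{fin}$ is purely a matter of checking that the ``infinitely many valid extensions'' condition defining $\osf^{fin}_F$ is preserved, which holds because the code induces a bijection between the relevant alphabet letters admitting extensions. Thus the isometry hypothesis is used essentially to force the map to be a $1$-block code, and everything downstream is a routine transport of the defining conditions of the OTW-construction.
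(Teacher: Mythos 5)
Your first implication is sound and is essentially the paper's argument, with a valid refinement. Since $d(x,y)=1$ exactly when $x_0\neq y_0$, an isometry satisfies $x_0=y_0\Leftrightarrow h(x)_0=h(y)_0$, so $h(x)_0=\phi(x_0)$ for an injective map $\phi:\CL_1(\osf_1)\to\CL_1(\osf_2)$; shift-commutativity then propagates this to $h(x)_n=\phi(x_n)$ for all $n$, and applying the same reasoning to $h^{-1}$ makes $\phi$ a bijection, so the isometric conjugacy really is a $1$-block code. The paper does not isolate this $1$-block structure: it uses only the weaker consequence that prefixes of images depend on prefixes (so $h(Z_\alpha)=Z_\beta$ with $|\beta|=|\alpha|$), defines $h^{OTW}$ at a finite point $x$ as the common $|x|$-prefix of $h(xy)$ over infinite extensions $y$, and then transports the ``infinitely many extensions'' condition and the generalized cylinders $\CZ(\alpha,F)$ exactly as you propose. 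Both routes rest on the same rigidity of $d$, and yours is, if anything, cleaner.

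The gap is in the second implication. Your continuity argument rests on the claim that length-preservation plus shift-equivariance force a conjugacy of OTW-subshifts to be a sliding-block code with a bounded window. Over infinite alphabets this is false: shift-commuting continuous maps of OTW-subshifts are in general only \emph{generalized} sliding block codes, where the window needed to determine $h(x)_0$ varies with $x$ and is unbounded --- this is precisely the distinction drawn in the paper's closing remark (citing \cite{GSS}). The Curtis--Hedlund--Lyndon mechanism fails because basic neighbourhoods of a finite point are generalized cylinders $\CZ(\alpha,F)$, which only constrain the next letter to \emph{avoid} a finite set; continuity therefore never pins $h(x)_0$ down to a function of finitely many coordinates. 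Fortunately, your conclusion needs none of this: a conjugacy $h$ of OTW-subshifts is a homeomorphism which, being length-preserving, maps $\osf_1^{inf}$ bijectively onto $\osf_2^{inf}$, and the topology that $\osf^{inf}=\osf$ inherits from $\osf^{OTW}$ \emph{is} the product topology, since $\CZ(\alpha,F)\cap\osf^{inf}=Z_\alpha\setminus\bigcup_{a\in F}Z_{\alpha a}$ and $Z_\alpha=\CZ(\alpha)\cap\osf^{inf}$. Hence $h|_{\osf_1}$ and its inverse are automatically continuous for the product topologies, which is all the ``Moreover'' clause asserts, and is how the paper argues; you should replace the sliding-block step by this restriction-of-topology observation.
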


\begin{proof}
Suppose that $h:\osf_1 \rightarrow \osf_2$ is an isometric conjugacy. We define a map $h^{OTW}$ from $\osf^{OTW}_1$ to $\osf^{OTW}_2$ as follows. 

Let $x\in \osf^{OTW}_1$. If $|x|=\infty$,  define $h^{OTW}(x) = h(x)$. If $|x|< \infty$, then there exists $y\in\osf$ such that $xy\in\osf$. We then define $h^{OTW}(x)$ as the first $|x|$ entries of $h(xy)$. To show that it is well-defined, we need to prove that $h^{OTW}(x)$ is independent of the choice of $y$ and that $h^{OTW}(x)\in \osf_2^{OTW}$. Let $z\in\osf$ be such that $xz\in\osf$ and note that $d(xy,xz)\leq \frac{1}{2^{|x|}}$. Because $h$ is an isometry $d(h(xy),h(xz))\leq \frac{1}{2^{|x|}}$, so that $h(xy)$ and $h(xz)$ coincide in the first $|x|$ coordinates. Next, we prove that $h^{OTW}(x)\in \osf_2^{OTW}$. Let $\{a_n\}_{n\in\nn}$ be an infinite subset of distinct elements of $\alf_1$ such that for each $n\in\nn$, there exists $y_n$ with $xa_ny_n\in\osf$ (see the definition of $\osf_1^{fin}$ in Section \ref{symbolic}). Note that if $n\neq m$, then $d(xa_ny_n,xa_my_m)=\frac{1}{2^{|x|}}$. Again, using that $h$ is an isometry, we obtain that $d(h(xa_ny_n),h(xa_my_m))=\frac{1}{2^{|x|}}$, so that $h(xa_ny_n)=h(x)b_nz_n$ for a infinite subset of elements $\{b_n\}_{n\in\nn}$ of $\alf_2$ and some $z_n\in\osf_2$ for each $n\in\nn$.

Clearly $h^{OTW}$ is length preserving and it is shift-commuting because $h$ is. Notice that $h^{-1}$ is also an isometry that commutes with the shift. Applying the same construction as above, we build $(h^{-1})^{OTW}$, which is easy to see is the inverse of $h^{OTW}$.

It remains to show that $h^{OTW}$ is a homeomorphism. By symmetry, it is enough to show that it is open. Let $\alpha\in\CL_{\osf_1}$. Even though it may not be the case that $\alpha\in\osf_1^{OTW}$, the same argument above using that $h$ is an isometry shows that there exists $\beta\in\CL_{\osf_2}$ such that $|\beta|=|\alpha|$ and $h(Z_{\alpha})= Z_{\beta}$. Then $\CZ(\alpha)=\CZ(\beta)$. Because $h^{OTW}$ is a bijection, it preserves set operations, so that if $F\scj\alf_1$ is finite, then $h(\CZ(\alpha,F))=\CZ(\beta,G)$ for some $G\scj\alf_2$ finite. Hence $h$ is open, since sets of the form $\CZ(\alpha,F)$ generate the topology on $\osf_1^{OTW}$.

For the converse statement, note that a conjugacy of OTW subshifts is, by definition, length preserving. Therefore, since  $(\osf^{OTW})^{\text{inf}} = \osf$ for any subshift $\osf$, a conjugacy of OTW subshifts restricts to a conjugacy of subshifts with the product topology. 
\end{proof}

\begin{remark}
    Recall that for OTW-subshifts the definition of a conjugacy includes the length preserving condition, in addition to the usual requirement that the map is a shift commuting homeomorphism. The difference between the definitions of conjugacy for subshifts with the metric $d$ given in \eqref{SBC} and OTW-subshifts may also be understood from the shift commuting condition, which is equivalent to the map being a sliding block code for subshifts with the metric given in \eqref{SBC}, and a generalised sliding block code for OTW-subshifts, see \cite{GSS, MR3918205, MR3620337}.
\end{remark}

\begin{theorem}\label{invariant}
    If $\osf_1$ and $\osf_2$ are isometrically conjugate subshifts over countable alphabets, then there exists a diagonal-preserving gauge-invariant *-isomorphism between the C*-algebras $\TCO_{\osf_1}$ and $\TCO_{\osf_2}$.
\end{theorem}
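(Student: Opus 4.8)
The plan is to produce the isomorphism by composing the two extension results already in hand, and then to extract the additional structure from the explicit form of the isomorphism that Theorem~\ref{stariso} delivers. Since $\osf_1$ and $\osf_2$ are isometrically conjugate, Theorem~\ref{aguaceiro} provides a conjugacy $h\colon\osf_1^{OTW}\to\osf_2^{OTW}$ of the associated OTW-subshifts. This is precisely condition~\eqref{iconj} of Theorem~\ref{stariso} (applied to the pair $\osf_1,\osf_2$), so the implication \eqref{iconj}$\Rightarrow$\eqref{iC*} yields a $*$-isomorphism $\Phi\colon\TCO_{\osf_1}\to\TCO_{\osf_2}$ satisfying conditions (a), (b) and (c) recorded there. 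It then remains only to check that $\Phi$ is diagonal-preserving and gauge-invariant.

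For diagonal-preservation, recall from Theorem~\ref{iso.partial} that $\TCO_{\osf_i}\cong C^*(\F\ltimes_{\varphih}\HTCB_{\osf_i})$, so the diagonal subalgebra is $C(\HTCB_{\osf_i})$, the algebra of functions on the groupoid unit space. Inspecting the proof of the implication \eqref{ihh}$\Rightarrow$\eqref{iC*} of Theorem~\ref{stariso}, the isomorphism $\Phi$ is obtained from \cite[Proposition~3.12(i)]{TopConjLocHom} and is constructed so that $\Phi(C(\HTCB_{\osf_1}))=C(\HTCB_{\osf_2})$. This is exactly the statement that $\Phi$ preserves the diagonal.

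For gauge-invariance, I would specialize condition (c) of Theorem~\ref{stariso}\eqref{iC*} to the constant function $g\equiv 1\in C(\osf_2,\mathbb{Z})$. Then $g\circ\widetilde{h}\equiv 1$ as well, and, by the definition of the weighted action, $z^{1}$ is the constant function with value $z$, which as an element of $C(\HTCB)\subseteq\TCO_{\osf}$ equals $z\cdot 1$. Hence $\gamma_z^{\osf_1,1}(s_a)=zs_a=\gamma_z(s_a)$ and $\gamma_z^{\osf_1,1}(p_A)=p_A=\gamma_z(p_A)$, so that $\gamma_z^{\osf_1,1}=\gamma_z$, and likewise $\gamma_z^{\osf_2,1}=\gamma_z$. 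Condition (c) therefore collapses to $\Phi\circ\gamma_z=\gamma_z\circ\Phi$ for all $z\in\mathbb{T}$, which is gauge-invariance.

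Once Theorems~\ref{aguaceiro} and~\ref{stariso} are in place the whole argument is essentially bookkeeping, and I do not expect any genuine obstacle. The single point deserving care is the last step: recognizing that the ordinary gauge action is exactly the $g\equiv 1$ specialization of the weighted action $\gamma^{\osf,g\circ\widetilde{h}}$, so that the auxiliary homeomorphism $\widetilde{h}$ drops out of the intertwining identity and no compatibility between $\widetilde{h}$ and $h$ need be examined. Since diagonal-preservation, the intertwining of the weighted actions in (c), and the identities (a)--(b) are all conclusions about one and the same isomorphism supplied by \eqref{iC*}, they hold simultaneously, so no separate construction is required.
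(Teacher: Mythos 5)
Your proposal is correct and takes essentially the same route as the paper, whose proof is literally the composition of Theorems~\ref{aguaceiro} and~\ref{stariso}. The additional bookkeeping you supply—extracting diagonal-preservation from the application of \cite[Proposition~3.12(i)]{TopConjLocHom} inside the proof of Theorem~\ref{stariso} (since conditions (a)--(c) alone only control the smaller subalgebra $\pi^*(C(\osf^{OTW}))$), and recovering gauge-invariance by specializing condition (c) to the constant function $g\equiv 1$ so that the weighted actions collapse to the ordinary gauge actions—correctly fills in the details the paper leaves implicit.
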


\begin{proof}
    It follows from Theorems~\ref{stariso} and \ref{aguaceiro}.
\end{proof}

\begin{remark}
    The K-theory of the corresponding C*-algebras of subshifts can be used as an invariant for isometric conjugacy. For instance, the subshifts considered in Examples~\ref{chuva} and \ref{tripla} are not isometrically conjugate. 
\end{remark}

\bibliographystyle{abbrv}
\bibliography{ref}

\end{document}